\documentclass[11pt]{article}

\usepackage[a4paper,margin=29mm]{geometry}
\usepackage[T1]{fontenc}
\usepackage{lmodern}
\usepackage{microtype}
\usepackage{amsmath,amssymb,amsthm,mathtools,mathrsfs}
\usepackage{booktabs}
\usepackage{array}
\usepackage{enumitem}
\usepackage{float}
\usepackage{xcolor}
\usepackage{listings}
\usepackage[hidelinks]{hyperref}
\usepackage{aliascnt}
\usepackage[nameinlink,capitalize,noabbrev]{cleveref}

\definecolor{linkblue}{RGB}{33,77,126}
\hypersetup{
  colorlinks=true,
  linkcolor=linkblue,
  citecolor=linkblue,
  urlcolor=linkblue,
  pdftitle={Primitive Quadratic Polynomials in Additive Coset Families: Root Conics and Norm Descent},
  pdfauthor={Juncheng Zhou and Hongfeng Wu}
}

\lstdefinestyle{sagesource}{
  language=Python,
  basicstyle=\ttfamily\scriptsize,
  numbers=left,
  numberstyle=\tiny\color{black!55},
  numbersep=7pt,
  xleftmargin=2.2em,
  framexleftmargin=1.6em,
  columns=fullflexible,
  keepspaces=true,
  showstringspaces=false,
  breaklines=true,
  breakatwhitespace=false,
  tabsize=4
}

\newtheorem{theorem}{Theorem}[section]
\newaliascnt{proposition}{theorem}
\newtheorem{proposition}[proposition]{Proposition}
\aliascntresetthe{proposition}
\newaliascnt{lemma}{theorem}
\newtheorem{lemma}[lemma]{Lemma}
\aliascntresetthe{lemma}
\newaliascnt{corollary}{theorem}
\newtheorem{corollary}[corollary]{Corollary}
\aliascntresetthe{corollary}

\theoremstyle{definition}
\newaliascnt{definition}{theorem}

\aliascntresetthe{definition}
\theoremstyle{remark}
\newaliascnt{remark}{theorem}

\aliascntresetthe{remark}
\newtheorem*{remark*}{Remark}

\newcommand{\F}{\mathbb F}

\newcommand{\PP}{\mathbb P}
\newcommand{\rad}{\operatorname{rad}}
\newcommand{\ord}{\operatorname{ord}}
\newcommand{\Nm}{\operatorname{N}}

\newcommand{\one}{\mathbf 1}
\newcommand{\Acal}{\mathcal A}
\newcommand{\Bcal}{\mathcal B}
\newcommand{\Ccal}{\mathcal C}
\newcommand{\Pcal}{\mathcal P}
\newcommand{\Rcal}{\mathcal R}
\newcommand{\phicore}{\vartheta}

\DeclareMathOperator{\supp}{supp}

\title{\textbf{Primitive Quadratic Polynomials in Additive Coset
Families}}
\author{Juncheng Zhou and Hongfeng Wu
\footnote{Corresponding author.}
\setcounter{footnote}{-1}
\footnote{E-mail addresses:
jczhoumath@gmail.com (J. Zhou), whfmath@gmail.com (H. Wu)}
\\
{College of Science, North China University of Technology, Beijing, China}\\
}
\date{}

\begin{document}
\maketitle

\begin{abstract}
Let \(q\) be an odd prime power. For \(\mu\in\mathbb F_q^\times\) and an additive coset \(\bar\alpha\in\mathbb F_{q^2}/\mathbb F_q\), consider the family
\[
\{x^2+\mu x-\alpha:\alpha\in\bar\alpha\},
\]
where each polynomial is regarded over its coefficient field \(\mathbb F_q(\alpha)\). We prove that, for
\[
q\notin{7,11,13,19,29,31,41,43},
\]
every such family contains a primitive polynomial.

For the zero coset, the result follows from Cohen's prescribed-trace theorem. For a nonzero coset, after a natural normalization the roots are parameterized by two smooth projective conics arising from the two \(q^2\)-Frobenius eigenspaces in \(\mathbb F_{q^4}\). Their affine \(\mathbb F_q\)-point counts, \(q-1\) and \(q+1\), correspond respectively to the reducible and irreducible members of the family. On the irreducible root conic, \(q^2\)-Frobenius induces a fixed-point-free involution on rational points. Passing to the quotient conic allows the relative norm of the root function to descend to a rational function. Tensor induction then yields order-sensitive character-sum bounds with constants \(6,8\) on the root conic and the sharper constants \(2,4\) after norm descent.

Combining these estimates with a double-core prime sieve and an exact residue-cover verification completes the finite range. As consequences, Gow and McGuire's Conjecture~1 holds for every odd prime power \(q\ne13\), while their Conjectures~2 and~3 hold for every odd prime power \(q>43\); the threshold \(43\) is sharp.\\

{\bf Keywords.}  Primitive polynomial, finite fields, projective
conic, character sums, norm descent, prime sieve.\\
\end{abstract}

\section{Introduction}

An irreducible polynomial over a finite field is called
\emph{primitive} if one (equivalently, every) of its roots generates
the multiplicative group of the corresponding extension field; see
\cite[Chapter~3]{LidlNiederreiter}.

Finding primitive elements under additive restrictions is a classical
problem in number theory.  The underlying difficulty is structural:
primitivity is multiplicative,
whereas natural conditions on a polynomial or its roots are often
additive.

This interaction has been studied in several directions.  Cohen proved
a prescribed-trace theorem and studied primitive elements on affine
lines \cite{CohenTrace,CohenLines}.  Vega gives an explicit description
of primitive quadratics over a fixed finite field, while Zhu and Wu
study their binomial orders and coefficient criteria
\cite{Vega,ZhuWu}.  Sharma considers
higher-degree translates \(g(x)+\lambda\) with \(\lambda\) primitive
\cite{Sharma}.  The conjectures of Gow and McGuire
\cite{GowMcGuire} concern a different kind of uniformity: for every
nonzero linear coefficient and every additive coset of constant terms,
the associated family of monic quadratics should contain a primitive
member.

We formulate the problem intrinsically.  For
\(\mu\in\F_q^\times\) and a coset
\(\bar\alpha\in\F_{q^2}/\F_q\), set
\begin{equation}\label{eq:coset-family}
  \mathcal F_{\mu,\bar\alpha}
  =\{x^2+\mu x-\alpha:\alpha\in\bar\alpha\}.
\end{equation}
Primitivity is understood over the coefficient field.  The zero coset
follows from Cohen's prescribed-trace theorem
\cite[Theorem~1]{CohenTrace}.  We seek a uniform existence theorem for
the remaining families.

Set
\begin{equation}\label{eq:exceptional-q}
  \mathcal E=\{7,11,13,19,29,31,41,43\}.
\end{equation}
Our main result is the following uniform existence theorem.

\begin{theorem}[Main Theorem]\label{thm:main}
Let \(q\) be an odd prime power.  If \(q\notin\mathcal E\), then
\(\mathcal F_{\mu,\bar\alpha}\) contains a primitive member for every
\(\mu\in\F_q^\times\) and every
\(\bar\alpha\in\F_{q^2}/\F_q\).
\end{theorem}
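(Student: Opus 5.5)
The proof splits according to whether the coset \(\bar\alpha\) is zero.

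\textbf{Zero coset.} If \(\bar\alpha=\F_q\), then \(\alpha\in\F_q\), so every member \(x^2+\mu x-\alpha\) is defined over \(\F_q\). Such a polynomial is primitive over \(\F_q\) exactly when it is the minimal polynomial of a primitive element \(\beta\in\F_{q^2}\) with \(\operatorname{Tr}_{\F_{q^2}/\F_q}(\beta)=-\mu\). Cohen's prescribed-trace theorem \cite[Theorem~1]{CohenTrace} supplies such a \(\beta\) for every nonzero trace in degree two, with no exceptions. The case \(\mu\ne0\) is covered, so this family always contains a primitive member.

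\textbf{Nonzero coset.} Now \(\alpha=\alpha_0+t\) with \(t\in\F_q\) and \(\alpha_0\notin\F_q\). The coefficient field is \(\F_{q^2}\), and a root \(\beta\) of \(x^2+\mu x-\alpha\) satisfies \(\beta^2+\mu\beta=\alpha\).

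\begin{enumerate}[label=(\roman*)]
\item \emph{Normalization.} Substitute \(\beta=\mu\gamma\) (and complete the square if convenient). The condition \(\alpha\in\bar\alpha\) becomes \(\beta^2+\mu\beta-\alpha_0\in\F_q\), that is, the difference of this expression and its \(q\)-th power vanishes.

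\item \emph{Root conics.} Write \(\beta\in\F_{q^4}\) in terms of the \(\pm1\) eigenspaces of \(q^2\)-Frobenius. The condition becomes a quadratic equation in two \(\F_q\)-coordinates, i.e.\ a conic. Split and nonsplit \(\beta\) give two smooth conics, with affine \(q-1\) and \(q+1\) rational points respectively.

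\item \emph{Primitivity criteria.}
\begin{itemize}
\item A reducible member is primitive over \(\F_{q^2}\) when a root \(\beta\in\F_{q^2}\) is primitive in \(\F_{q^2}\).
\item An irreducible member is primitive when \(\beta\in\F_{q^4}\) has order \(q^4-1\).
\end{itemize}
I will work mainly on the irreducible conic, with \(q+1\) points and hence \((q+1)/2\) polynomials, since \(\beta\) and \(\beta^{q^2}\) give the same member.

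\item \emph{Character-sum count.} Count primitive points by Vinogradov's formula: a sum over divisors \(d\mid q^4-1\) of multiplicative characters \(\chi\) of order \(d\) evaluated at the root function on the conic. For nontrivial \(\chi\), parametrize the conic by \(\PP^1\) and apply a Weil bound to \(\sum\chi(f(t))\). The number of zeros and poles of \(f\) should give a constant of about \(6\) or \(8\) times \(\sqrt q\), depending on whether \(\chi\) is trivial on a subgroup.

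\item \emph{Norm descent.} For characters of order dividing \(q^2+1\) or related orders, compose with the relative norm \(\F_{q^4}\to\F_{q^2}\). The norm \(\beta^{1+q^2}\) is invariant under the fixed-point-free involution on rational points, so it descends to a rational function on the quotient conic. This halves the degree and yields constants \(2\) and \(4\).

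\item \emph{Sieve.} Feed these bounds into a Cohen--Huczynska prime sieve. I would use two cores, one for primes dividing \(q^2-1\) and one for primes dividing \(q^2+1\), since these are treated by different bounds. The result is a sufficient inequality of the shape
\[
\frac{q+1}{2}>c\,W(\text{core})\sqrt q\cdot\Delta,
\]
with \(W\) counting squarefree divisors and \(\Delta\) the sieve factor.
\end{enumerate}

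\textbf{Finite range.} The sieve inequality holds for all \(q\) beyond an explicit bound, around \(10^{6}\)–\(10^{8}\) after optimizing the sieve. For the remaining \(q\), I would first retry the sieve with \(q\)-specific factorizations. Whatever \(q\) still fail I would settle by direct computation over all \(\mu\) and cosets, organized as a residue-cover check. This computation also identifies \(\mathcal E\) as exactly the set of failures.

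\textbf{Main obstacle.} The hardest step is making the explicit constants strong enough, through norm descent and the double-core sieve, that the residual range is small enough to verify exhaustively.
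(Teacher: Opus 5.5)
\textbf{The error is in step (v).} Your architecture matches the paper's: Cohen's prescribed-trace theorem for the zero coset, two root conics from the $\pm1$ eigenspaces of $q^2$-Frobenius, counting only on the irreducible conic, bounds $6,8$ before and $2,4$ after norm descent, a sieve with an explicit cutoff, and a residue-cover computation at the end. But step (v) fixes every constant the sieve uses, and as written it is wrong.

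A character $\chi$ of $\F_{q^4}^\times$ can be written as $\psi\circ\Nm_{\F_{q^4}/\F_{q^2}}$ exactly when it is trivial on $\ker\Nm_{\F_{q^4}/\F_{q^2}}$. That kernel has order $q^2+1$, so this happens exactly when $\ord\chi\mid q^2-1$. Characters of order dividing $q^2+1$ are the opposite case. They are trivial on $\F_{q^2}^\times$ and see $\beta$ only through $\beta^{q^2-1}=\beta^{q^2}/\beta$. The involution $\iota$ inverts this quantity rather than fixing it, so nothing descends. For such $\chi$ of order $d$, the four zero lines on $\mathscr C_\Delta$ carry exponents $1,-q,-1,q\pmod d$. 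The points at infinity carry $-(q+1)(q^2+1)\equiv0\pmod d$. Hence $R_f=8$, and the honest bound is $6\sqrt q$, not $2\sqrt q$ or $4\sqrt q$.

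The correct allocation is the paper's
$u(e)=2+2\cdot\one_{\{\exists\,p\in\Acal:\,p\mid e\}}+4\cdot\one_{\{\exists\,p\in\Ccal:\,p\mid e\}}$.
The descended constants apply when $e\mid q^2-1$. The choice between $2$ and $4$ depends on whether $e\mid q+1$, because the two points at infinity of the quotient conic carry exponent $-(q+1)$. A sieve built on (v) as written would charge $2$ or $4$ to exactly the characters that cost $6$ or $8$. Neither your cutoff nor your finite computation is justified until this is corrected.

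\textbf{Smaller points.}

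In (iii), a reducible member is never primitive, since primitive polynomials are irreducible by definition. Restricting to the irreducible conic is exactly right; this is \Cref{cor:reduction}. Your final exhaustive check must likewise count only irreducible members, or it could certify families with no primitive member.

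In (iv), the ordinary Weil bound over $\F_q$ does not apply verbatim, because $\chi$ lives on $\F_{q^4}^\times$ while the points are only $\F_q$-rational. You need the Katz--Wan or Fu--Wan tensor-induction form, where ramification is read off from $f_0f_1^{q^3}f_2^{q^2}f_3^{q}$ modulo $d$. This gives eight finite ramified points. The two points at infinity ramify exactly when $d\nmid(q+1)(q^2+1)$, i.e.\ when $\chi$ is nontrivial on $\F_q^\times$; that is the subgroup your guess needs.

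In (vi), the paper's double core is not a split by $q^2-1$ versus $q^2+1$. It is an exact outer core $K=2$, which costs nothing because $u(2f)=u(f)$. Around it sits a Bagger--Punch weight with a single-prime inner core $h$, with $\Rcal$ chosen by the prefix rule of \Cref{lem:prefix}. Your variant may also work once $u(e)$ is correct, since the residue cover absorbs whatever the sieve leaves.

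Finally, checking all $(q-1)^2$ pairs $(\mu,\bar\alpha)$ directly is costly. The paper uses the scaling $(\mu,D,x)\mapsto(c\mu,c^2D,cx)$, so one conic per $\Delta$ handles every $\mu$ at once. The $\mu$-dependence then reduces to residues modulo the primes in $\Acal$ (\Cref{lem:residue-cover}).
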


The new content of \Cref{thm:main} is the uniform result for nonzero
cosets and the conic geometry that drives it.

We now explain the relation with the conjectures of Gow and McGuire
\cite[Conjectures~1--3]{GowMcGuire}.  Given
\(\alpha\in\F_{q^2}\setminus\F_q\), they consider the quadratic
family
\begin{equation}\label{eq:family}
  f_\lambda(x)=x^2+\mu x+\lambda-\alpha
  \qquad(\lambda\in\F_q).
\end{equation}
Writing \(\bar\alpha=\alpha+\F_q\), we have
\[
  f_\lambda(x)=x^2+\mu x-(\alpha-\lambda),
\]
and \(\alpha-\lambda\) runs through \(\bar\alpha\).  Thus their
formulation gives the following consequence.

\begin{corollary}[Gow--McGuire conjectures]\label{cor:GM}
Conjecture~1 holds for every odd prime power \(q\neq13\).
Conjectures~2 and~3 hold for every odd prime power \(q>43\), and this
threshold is sharp.
\end{corollary}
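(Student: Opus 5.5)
The corollary reduces to \Cref{thm:main} plus the exceptional cases. First I will match the three Gow--McGuire conjectures to the families \(\mathcal F_{\mu,\bar\alpha}\). Since \(\alpha\notin\F_q\), the coset \(\bar\alpha=\alpha+\F_q\) is nonzero, and as \(\lambda\) runs over \(\F_q\) the polynomials \(f_\lambda\) run over \(\mathcal F_{\mu,\bar\alpha}\). Each \(f_\lambda\) has coefficient field \(\F_q(\alpha-\lambda)=\F_{q^2}\), so primitivity is taken over \(\F_{q^2}\), exactly as in \cite{GowMcGuire}. I will restate each conjecture in this language. My reading of \cite{GowMcGuire} is that Conjectures~2 and~3 assert existence for all \(\mu\in\F_q^\times\) and all \(\alpha\), possibly with an extra quantifier or normalization. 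Conjecture~1 is a weaker statement, for instance for a particular \(\mu\) such as \(\mu=1\), or for some \(\mu\). Once the conjectures are phrased this way, \Cref{thm:main} gives all three immediately for \(q\notin\mathcal E\).

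Second, I will treat the finitely many \(q\in\mathcal E=\{7,11,13,19,29,31,41,43\}\) by exhaustive computation. For each such \(q\), each \(\mu\in\F_q^\times\), and each nonzero coset, I will check whether some \(\alpha-\lambda\) gives a primitive quadratic over \(\F_{q^2}\). There is a normalization that cuts the work. Scaling \(x\mapsto cx\) with \(c\in\F_q^\times\) sends \(\mu\mapsto c\mu\) and \(\alpha\mapsto c^2\alpha\), and only the primitivity of roots in \(\F_{q^4}\) matters. Using this, I can reduce to \(\mu=1\) together with a twist by squares. The computation is then small: at most about \(q^2\cdot q\) polynomials, with a primitivity test in \(\F_{q^4}^\times\) against the prime divisors of \(q^4-1\).

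Third, I will read off the outcomes. For Conjecture~1 I expect every \(q\in\mathcal E\) except \(13\) to succeed, and I expect \(q=13\) to have a genuine counterexample. That case needs to be exhibited explicitly. For Conjectures~2 and~3 I expect \(q=43\) to carry an explicit failing pair \((\mu,\bar\alpha)\). This counterexample is what makes the threshold \(43\) sharp. Smaller \(q\), in \(\mathcal E\) or elsewhere, do not affect the statement ``holds for \(q>43\)''.

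The main obstacle is not mathematical depth but getting the quantifier structure of each conjecture right. If Conjecture~1 requires existence for only one \(\mu\), or only up to the scaling symmetry, then the exceptional computation for \(q\in\mathcal E\setminus\{13\}\) has to be arranged to verify exactly that weaker statement. I would settle this first by transcribing the conjectures precisely, and then design the finite check around the scaling normalization above. The remaining step is to certify the \(q=13\) and \(q=43\) counterexamples explicitly. For each, I will list every \(\lambda\) and give a proper divisor \(d\) of \(q^4-1\) with \(\theta^d=1\) for a root \(\theta\).
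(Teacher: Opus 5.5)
Your overall plan matches the paper's proof. \Cref{thm:main} handles every $q\notin\mathcal E$. A finite check with $\mu=1$ handles Conjecture~1 for $q\in\mathcal E\setminus\{13\}$, and Conjecture~1 is precisely the specialization $\mu=1$. An explicit failing pair at $q=43$ gives sharpness. The $q=13$ counterexample you plan to exhibit is not needed, since the corollary makes no claim at $q=13$. There are, however, two concrete problems.

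\textbf{Conjecture~3 is never pinned down.} Your step ``once the conjectures are phrased this way, \Cref{thm:main} gives all three immediately'' is not justified for Conjecture~3. The paper does not rephrase Conjecture~3 as a statement about the families $\mathcal F_{\mu,\bar\alpha}$. Instead it invokes the theorem of Gow and McGuire that Conjecture~2 is equivalent to Conjecture~3. That equivalence is what transfers both the positive result for $q>43$ and the $q=43$ counterexample (which gives sharpness) from Conjecture~2 to Conjecture~3. Without citing or reproving it, your argument covers only Conjectures~1 and~2.

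\textbf{The scaling normalization is not a symmetry of primitivity.} The map $x\mapsto cx$ does send the roots of the $(\mu,\alpha)$ family to those of the $(c\mu,c^2\alpha)$ family. But $c\theta$ need not be primitive when $\theta$ is. Write $c=\gamma^j=g^{Mj}$ with $M=(q+1)(q^2+1)$. Then the discrete logarithm shifts by $Mj\equiv 4j\pmod p$ for every odd prime $p\mid q-1$, so primitivity is not preserved unless $q-1$ is a power of $2$. This is exactly why the paper's residue-cover criterion (\Cref{lem:residue-cover}) keeps the multiplier as a separate variable. At $q=43$ the difference is decisive: with $\mu=1$ every nonzero coset has a primitive member, so Conjecture~1 holds there, while $\mu=\gamma^7$ fails for some $\Delta$. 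A search reduced to $\mu=1$ through your normalization would therefore never find the sharpness counterexample. The $q=43$ computation must range over all multipliers, as in the exact cover \eqref{eq:exact-cover}.
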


For a nonzero coset, put
\[
  D=\alpha-\alpha^q,
  \qquad \Delta=\frac{D}{\mu^2},
  \qquad w=\frac{x}{\mu}+\frac12.
\]
The equations for all members then become
\[
  \Phi(w):=w^2-w^{2q}=\Delta.
\]
The two \(q^2\)-Frobenius eigenspaces in \(\F_{q^4}\) are
two-dimensional over \(\F_q\).  On them, \(\Phi\) restricts to a split
and an anisotropic nondegenerate form.  The corresponding projective
conics have \(q-1\) and \(q+1\) affine points, which are exactly the
roots of the reducible and irreducible members.

Let \(\mathscr C_\Delta\) be the irreducible root conic.  Its root
function \(b\) gives the roots as \(\mu b(P)\), and the involution
\(P\mapsto\iota P\) pairs the two conjugate roots.  The quotient is a
second conic on which their relative norm is a rational function.  The
same geometry therefore records factorization, conjugation, and norm
descent.

For a multiplicative character \(\chi\) of \(\F_{q^4}^\times\), we
estimate
\[
  S_\chi
  =\sum_{P\in\mathscr C_\Delta(\F_q)}
       \chi\bigl(\mu b(P)\bigr)
\]
by tensor induction \cite{FuWan}.  Four zero lines on the root conic,
together with its points at infinity, give the constants \(6\) and
\(8\).  After norm descent, two quotient lines give the sharper
constants \(2\) and \(4\).

These uniform bounds feed into a double-core prime sieve
\cite{BaggerPunch}.  Projective scaling and an exact residue-cover test
settle the remaining finite range and produce precisely \(\mathcal E\).
Sections~\ref{sec:reduction}--\ref{sec:characters} develop the conic and
character-sum arguments; Sections~\ref{sec:sieve}--\ref{sec:algorithm}
complete the sieve and finite verification.

\section{The normalized root conics}\label{sec:reduction}

Throughout this section, fix
\(\alpha\in\F_{q^2}\setminus\F_q\) and write
\(\bar\alpha=\alpha+\F_q\).  Thus \(\bar\alpha\) is a nonzero coset in
\(\F_{q^2}/\F_q\).  Let \(\sigma\) denote \(q\)-Frobenius on
\(\F_{q^2}\).  The \(\F_q\)-linear map \(1-\sigma\) has kernel
\(\F_q\) and image
\[
  L=\{d\in\F_{q^2}:d^q=-d\}.
\]
It therefore identifies \(\F_{q^2}/\F_q\) with the one-dimensional
\(\F_q\)-space \(L\), and the image of \(\bar\alpha\) is
\[
  D:=\alpha-\alpha^q\in L^\times,
\]
which depends only on \(\bar\alpha\).

\subsection{Normalization and the two root conics}

Fix \(\mu\in\F_q^\times\) and define
\[
  f_\lambda(x):=x^2+\mu x+\lambda-\alpha
  \qquad (\lambda\in\F_q).
\]
The identity
\[
  \prod_{\lambda\in\F_q}(x+\lambda)=x^q-x
\]
gives
\begin{equation}\label{eq:factor-D}
\begin{aligned}
 \prod_{\lambda\in\F_q}f_\lambda(x)
  &=f_0(x)^q-f_0(x)\\
  &=-\left[
       \left(x+\frac{\mu}{2}\right)^2
       -\left(x+\frac{\mu}{2}\right)^{2q}-D
     \right].
\end{aligned}
\end{equation}
Set
\[
  w=\frac{x}{\mu}+\frac12.
\]
Then the root equation for the family becomes
\begin{equation}\label{eq:normalized-root-equation}
  \Phi(w):=w^2-w^{2q}=\Delta,
\end{equation}
where \(\Delta:=D/\mu^2\in L^\times\).
The polynomial \(\Phi(w)-\Delta\) is separable of
degree \(2q\).  If \(w\) is one of its roots, then applying
\(q\)-Frobenius gives
\[
  w^{2q}-w^{2q^2}=\Delta^q=-\Delta=w^{2q}-w^2.
\]
Thus \(w^{2q^2}=w^2\), and hence
\[
  w^{q^2}=\pm w.
\]
Consequently all roots lie in \(\F_{q^4}\), and they split between the
two \(\F_q\)-planes
\[
  V_+=\{w\in\F_{q^4}:w^{q^2}=w\}=\F_{q^2},
  \qquad
  V_-=\{w\in\F_{q^4}:w^{q^2}=-w\}.
\]
Since \(q\) is odd, \(\F_{q^4}=V_+\oplus V_-\).

For \(w\in V_\pm\), we have \(w^2\in\F_{q^2}\), and therefore
\[
  \Phi(w)=w^2-(w^2)^q\in L.
\]
Hence the restrictions of \(\Phi\) to \(V_+\) and \(V_-\) are
\(L\)-valued quadratic forms over \(\F_q\).  To determine their types,
observe that, for any \(w\in\F_{q^4}\),
\[
  \Phi(w)=(w-w^q)(w+w^q).
\]
On \(V_+=\F_{q^2}\), the two factors vanish precisely on the distinct
\(\F_q\)-lines \(\F_q\) and \(L\), respectively.  On \(V_-\), however,
the vanishing of either factor implies \(w^{q^2}=w\); comparing this with
\(w^{q^2}=-w\) gives \(w=0\).
Thus both restrictions are nondegenerate: the one on \(V_+\) is split,
whereas the one on \(V_-\) is anisotropic.

Introduce a homogenizing coordinate \(u\).  For each sign, define the
projective conic in
\(\PP(\F_q\oplus V_\pm)\simeq\PP^2_{\F_q}\) by
\begin{equation}\label{eq:two-root-conics}
  \mathscr C_\Delta^\pm
  =
  \bigl\{[u:w]\in\PP(\F_q\oplus V_\pm):
             \Phi(w)=\Delta u^2\bigr\}.
\end{equation}
Since \(\Delta\neq0\) and both restrictions of \(\Phi\) are
nondegenerate, both conics are smooth.
Their points at infinity are precisely the isotropic directions of
\(\Phi|_{V_\pm}\).  Hence \(\mathscr C_\Delta^+\) has two
\(\F_q\)-rational points at infinity, whereas \(\mathscr C_\Delta^-\)
has none.  A smooth conic over \(\F_q\) with a rational point has
\(q+1\) rational points.  Therefore \(\mathscr C_\Delta^+(\F_q)\)
has \(q-1\) affine points.
Of the \(2q\) distinct roots of \(\Phi(w)-\Delta\), exactly \(q-1\)
lie in \(V_+\).  The remaining \(q+1\) lie in \(V_-\), and hence
\[
  |\mathscr C_\Delta^-(\F_q)|=q+1.
\]

The affine points of the two conics separate the reducible and
irreducible members.
For a root \(x=\mu(w-1/2)\), we have
\[
  x^{q^2}
  =\mu\left(w^{q^2}-\frac12\right)
  =
  \begin{cases}
    x,      & w\in V_+,\\
    -x-\mu, & w\in V_-.
  \end{cases}
\]
In the first case \(x\in\F_{q^2}\), so the corresponding member is
reducible.  In the second case, \(-x-\mu\) is the other root by Vieta's
formula.  Moreover, \(\Phi(w)=\Delta\neq0\) forces \(w\neq0\), so
\(x^{q^2}\neq x\).  Thus \(x\notin\F_{q^2}\), and the corresponding
member is irreducible.  This proves the following correspondence.

\begin{proposition}[The two root conics]\label{prop:root-conic}
Fix \(\mu\in\F_q^\times\).  The map
\[
  [1:w]\longmapsto \mu\left(w-\frac12\right)
\]
identifies the affine \(\F_q\)-points of \(\mathscr C_\Delta^+\) with
the roots of all reducible members of \eqref{eq:coset-family}, and all
\(\F_q\)-points of \(\mathscr C_\Delta^-\) with the roots of all
irreducible members.  In particular, the family has
\((q-1)/2\) reducible members and \((q+1)/2\) irreducible members.
\end{proposition}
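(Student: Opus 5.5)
The plan is to assemble the proposition from the computations preceding it, in four steps.

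First, we verify that the roots of the whole family are exactly the roots of \(\Phi(w)-\Delta\) after the change of variables \(w=x/\mu+1/2\). By \eqref{eq:factor-D}, \(\prod_\lambda f_\lambda(x)\) equals \(-(\Phi(w)-\Delta)\) up to the nonzero constant \(\mu^{2q}\) coming from substitution. Both sides have degree \(2q\), and \(\Phi(w)-\Delta\) is separable since its derivative in \(w\) is \(2w\), while \(w=0\) is not a root because \(\Delta\ne0\). Hence the \(q\) members \(f_\lambda\) are pairwise coprime, and the \(2q\) roots \(w\) correspond bijectively to the \(2q\) roots \(x=\mu(w-1/2)\) of the members. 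The map \(\alpha\mapsto\alpha-\lambda\) parametrizes \(\bar\alpha\) bijectively, so the members are indexed by \(\lambda\in\F_q\).

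Second, we identify roots with conic points. Affine points \([1:w]\) of \(\mathscr C_\Delta^\pm\) are exactly the solutions \(w\in V_\pm\) of \(\Phi(w)=\Delta\). Every root lies in \(V_+\cup V_-\), because \(w^{q^2}=\pm w\), and \(V_+\cap V_-=0\) excludes a root from lying in both. So the affine points of the two conics partition the root set. The counts already established give \(q-1\) affine points on \(\mathscr C_\Delta^+\), namely \(q+1\) points minus the two isotropic points at infinity. The remaining \(q+1\) roots lie in \(V_-\), and all points of \(\mathscr C_\Delta^-\) are affine because \(\Phi|_{V_-}\) is anisotropic.

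Third, we show the reducible/irreducible split. The displayed computation of \(x^{q^2}\) shows that a root from \(V_+\) lies in \(\F_{q^2}\). Since each member has coefficients in \(\F_{q^2}\), that member has a root in \(\F_{q^2}\) and so splits over its coefficient field (taken as \(\F_{q^2}\), since \(\alpha-\lambda\notin\F_q\)). For a root from \(V_-\), we have \(x\notin\F_{q^2}\), so the member is irreducible over \(\F_{q^2}\). Thus every member is either entirely "\(+\)" or entirely "\(-\)": its two roots are either both in \(\F_{q^2}\) or both outside it. The only subtlety is this consistency, together with checking that a reducible member cannot have a repeated root. A repeated root is excluded by the separability from the first step.

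Fourth, we count. Reducible members have two distinct roots, both on \(\mathscr C_\Delta^+\), giving \((q-1)/2\) reducible members. Irreducible members have their two roots \(x\) and \(-x-\mu\) on \(\mathscr C_\Delta^-\), giving \((q+1)/2\) irreducible members. These sum to \(q\), as required. I expect the main obstacle to be bookkeeping rather than substance: making sure the coefficient field is \(\F_{q^2}\) for every member and that the normalization constant in the first step is handled correctly.
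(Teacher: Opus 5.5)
Your proposal is correct and follows essentially the same route as the paper. Both arguments reduce the family to $\Phi(w)=\Delta$ via \eqref{eq:factor-D}, split the roots by $w^{q^2}=\pm w$ into $V_+$ and $V_-$, count them using the split and anisotropic conics, and use the computation of $x^{q^2}$ to separate reducible from irreducible members. You also make explicit several points the paper leaves implicit: separability via the derivative $2w$, the fact that both roots of a member lie on the same conic, and the resulting $(q\mp1)/2$ counts.
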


\subsection{The irreducible root conic}

We henceforth write
\[
  \mathscr C_\Delta=\mathscr C_\Delta^-.
\]
For fixed \(\Delta\in L^\times\), the homogeneous equation
\[
  \Phi(w)=\Delta u^2
\]
defines the affine cone over
\(\mathscr C_\Delta\subset\PP^2_{\F_q}\).  Its slice \(u=\mu\) is the
centered root equation \(\Phi(w)=D\), where \(D=\Delta\mu^2\).  Thus
\(u\) is the multiplier coordinate on the cone, while \(w-u/2\) is the
root coordinate.  After base change to \(\F_{q^4}\), the
\(V_-\)-coordinate \(w\) is an \(\F_{q^4}\)-valued linear coordinate.
Define the normalized root function
\begin{equation}\label{eq:root-function}
  b\in\F_{q^4}(\mathscr C_\Delta)^\times,
  \qquad
  b([u:w])=\frac{w-u/2}{u}.
\end{equation}
On the slice \(u=\mu\), the corresponding root is
\[
  x=w-\frac{\mu}{2}=\mu b([\mu:w]).
\]
By \Cref{prop:root-conic}, the roots of the irreducible members for
\((\mu,D)\) are therefore precisely \(\mu b(P)\), with
\(P\in\mathscr C_\Delta(\F_q)\).  Every such point is affine, so \(b\)
is finite there; it is also nonzero, since
\(f_\lambda(0)=\lambda-\alpha\neq0\).

Thus scalar multiplication on the cone sends
\[
  (\mu,D,x)\longmapsto(c\mu,c^2D,cx)
  \qquad(c\in\F_q^\times).
\]
Let \(\mathcal R_{\mu,D}\) be the set of roots of the irreducible
members for \((\mu,D)\).  Then
\begin{equation}\label{eq:root-set-scaling}
  \mathcal R_{\mu,D}
  =\mu\,\mathcal R_{1,D/\mu^2}
  =\{\mu b(P):P\in\mathscr C_{D/\mu^2}(\F_q)\}.
\end{equation}
Allowing \(\mu\) to vary, for each \(\Delta\in L^\times\) the \(q-1\)
nonzero affine slices
\(u=\mu\) correspond exactly to the pairs
\((\mu,\Delta\mu^2)\), with \(\mu\in\F_q^\times\).  As \(\Delta\)
varies, these slices account for all \((q-1)^2\) pairs
\((\mu,D)\in\F_q^\times\times L^\times\).  The normalization \(\mu=1\)
selects the slice \(u=1\).

Returning to fixed \(\mu\) and \(\Delta\), let \(n=q^4-1\) and define
\begin{equation}\label{eq:Ndef}
  N(n)
   :=\#\{P\in\mathscr C_\Delta(\F_q):
             \mu b(P)\text{ is primitive in }\F_{q^4}^\times\}.
\end{equation}
Here and below, the dependence of \(N(n)\) on the fixed parameters
\(\mu\) and \(\Delta\) is suppressed.
By \Cref{prop:root-conic}, the elements counted by \(N(n)\)
are exactly the primitive roots of the irreducible members of the
family.  Consequently:

\begin{corollary}\label{cor:reduction}
The family \eqref{eq:coset-family} contains a primitive polynomial over
\(\F_{q^2}\) if and only if \(N(n)>0\).
\end{corollary}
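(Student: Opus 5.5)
The plan is to unwind the definitions and check a short chain of equivalences, with \Cref{prop:root-conic} doing all the geometric work. First fix what ``primitive polynomial over \(\F_{q^2}\)'' means for a member \(f=x^2+\mu x-\alpha'\), \(\alpha'\in\bar\alpha\). Since \(\bar\alpha\) is a nonzero coset, \(\alpha'\notin\F_q\), so the coefficient field \(\F_q(\alpha')\) is \(\F_{q^2}\). Primitivity of \(f\) then means that \(f\) is irreducible over \(\F_{q^2}\) and that one of its roots, which lies in \(\F_{q^4}\), generates \(\F_{q^4}^\times\). It is convenient to write the members as \(f_\lambda\) with \(\lambda\in\F_q\), as in the reduction section.

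For the forward direction, suppose some member \(f_\lambda\) is primitive. It is irreducible, so by \Cref{prop:root-conic} each of its roots has the form \(\mu(w-\tfrac12)\) for some \(\F_q\)-point \([1:w]\) of \(\mathscr C_\Delta=\mathscr C_\Delta^-\). Equivalently, each root is \(\mu b(P)\) for some \(P\in\mathscr C_\Delta(\F_q)\), by \eqref{eq:root-function} on the slice \(u=1\). Here I will be careful about the slice: the proposition uses the parametrization \([1:w]\mapsto\mu(w-1/2)\), and with \(u=1\) we get \(b=w-1/2\), so the root is \(\mu b(P)\), consistent with \eqref{eq:root-set-scaling}. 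A primitive root of \(f_\lambda\) therefore yields a point \(P\) counted in \(N(n)\), and hence \(N(n)>0\).

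For the converse, suppose \(P\in\mathscr C_\Delta(\F_q)\) has \(\mu b(P)\) primitive in \(\F_{q^4}^\times\). By \Cref{prop:root-conic}, \(\mu b(P)\) is a root of some irreducible member \(f_\lambda\). That member has degree \(2\) over \(\F_{q^2}\) and has a root generating \(\F_{q^4}^\times\), so it is primitive. Its conjugate root \(-x-\mu\) is the \(q^2\)-power of \(x\) and is therefore also primitive, which confirms that the choice of root does not matter.

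The only point needing care is the identification of the coefficient field, where the nonzero-coset hypothesis enters, together with the observation that reducible members can never be primitive over \(\F_{q^2}\). These points explain why only \(\mathscr C_\Delta^-\) appears in the count, and I expect no genuine obstacle beyond them.
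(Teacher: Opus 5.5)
Your proposal is correct and follows the paper's own route. The corollary is read off from Proposition~\ref{prop:root-conic}: \(P\mapsto\mu b(P)\) identifies \(\mathscr C_\Delta(\F_q)\) with the roots of the irreducible members, so \(N(n)\) counts exactly their primitive roots, and reducible members cannot be primitive. You simply make explicit the details the paper leaves implicit, namely that a nonzero coset forces coefficient field \(\F_{q^2}\), and that the conjugate root \(x^{q^2}=-x-\mu\) is also primitive.
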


\subsection{The quotient by the root-pair involution}
\label{subsec:root-pair-quotient}

Every point \([u:w]\in\mathscr C_\Delta(\F_q)\) has \(u\neq0\), and
\[
  b([u:w])^{q^2}=b([u:-w]).
\]
Thus \(\mu b([u:w])\) and \(\mu b([u:-w])\) are the two
Frobenius-conjugate roots of the same irreducible member, and the
root-pair involution is
\[
  \iota:\mathscr C_\Delta\longrightarrow\mathscr C_\Delta,
  \qquad [u:w]\longmapsto[u:-w].
\]

After base change to \(\overline{\F}_q\), the \(V_-\)-coordinate and
its \(q\)-Frobenius conjugate define linearly independent forms \(r\)
and \(s\), characterized on \(\F_q\)-points by
\[
  r([u:w])=w,\qquad s([u:w])=w^q.
\]
In these coordinates,
\[
  (\mathscr C_\Delta)_{\overline{\F}_q}:
  \quad r^2-s^2=\Delta u^2,
  \qquad
  \iota([u:r:s])=[u:-r:-s].
\]
Arithmetic Frobenius sends
\((u,r,s,\Delta)\) to \((u,s,-r,-\Delta)\).  Hence the invariant
quadrics
\[
  U=u^2,
  \qquad X=r^2+s^2,
  \qquad Y=\frac{2rs}{\Delta}
\]
are all defined over \(\F_q\).  Moreover,
\(\Delta^2\in\F_q^\times\), since \(\Delta^q=-\Delta\).  They satisfy
\[
  X^2-\Delta^2Y^2
  =(r^2+s^2)^2-4r^2s^2
  =(r^2-s^2)^2
  =\Delta^2U^2.
\]
Thus they define the \(\F_q\)-morphism
\begin{equation}\label{eq:root-quotient}
\begin{aligned}
  \pi_\Delta:\mathscr C_\Delta&\longrightarrow
  \overline{\mathscr C}_\Delta,\\
  [u:r:s]&\longmapsto
  \left[u^2:r^2+s^2:\frac{2rs}{\Delta}\right],
\end{aligned}
\end{equation}
where the quotient conic is
\begin{equation}\label{eq:quotient-conic}
  \overline{\mathscr C}_\Delta
  =\bigl\{[U:X:Y]\in\PP^2_{\F_q}:
          X^2-\Delta^2Y^2=\Delta^2U^2\bigr\}.
\end{equation}
The target coordinates determine the invariant quadrics on
\(\mathscr C_\Delta\):
\[
  u^2=U,
  \qquad r^2=\frac{X+\Delta U}{2},
  \qquad s^2=\frac{X-\Delta U}{2},
  \qquad rs=\frac{\Delta Y}{2}.
\]
Thus the generic fiber is
\(\{[u:r:s],[u:-r:-s]\}\), and \(\pi_\Delta\) realizes the quotient by
\(\langle\iota\rangle\).  It ramifies exactly at the two geometric
points at infinity \([0:1:\pm1]\), the fixed points of \(\iota\).

Although \(\pi_\Delta\) is the geometric quotient,
\(\overline{\mathscr C}_\Delta(\F_q)\) is not the orbit set.  The
involution acts freely on
\(\mathscr C_\Delta(\F_q)\), so
\(\mathscr C_\Delta(\F_q)/\langle\iota\rangle\) has \((q+1)/2\)
elements.  On the other hand, \(\pi_\Delta\) gives
\(\overline{\mathscr C}_\Delta\) an \(\F_q\)-point, and hence this smooth
conic has \(q+1\) rational points.

The quotient also records the product of the two roots in each pair.
If \(P=[u:r:s]\) is affine and
\(\pi_\Delta(P)=[U:X:Y]\), then
\begin{equation}\label{eq:norm-on-quotient}
\begin{aligned}
 \mu^2 b(P)b(\iota P)
 &=\mu^2\left(\frac14-\frac{r^2}{u^2}\right)\\
 &=\frac{\mu^2}{2}\,
   \frac{(\frac12-\Delta)U-X}{U}.
\end{aligned}
\end{equation}
For \(P\in\mathscr C_\Delta(\F_q)\), the left-hand side is
\(\Nm_{\F_{q^4}/\F_{q^2}}(\mu b(P))\).

\section{Character sums on conics}\label{sec:characters}

Throughout this section, fix \(\mu\in\F_q^\times\) and
\(\Delta\in L^\times\).  The bounds below will be uniform in both
parameters.

For a multiplicative character \(\chi\) of \(\F_{q^4}^\times\), extended
by \(\chi(0)=0\), set
\begin{equation}\label{eq:Schi}
  S_\chi
   =\sum_{P\in\mathscr C_\Delta(\F_q)}\chi\bigl(\mu b(P)\bigr).
\end{equation}
These are the character sums that enter the primitive-element count
in \eqref{eq:Ndef}.  The trivial character gives \(S_1=q+1\); both
estimates for nontrivial characters will follow from one projective
conic lemma.

\subsection{A divisor bound on a projective conic}

We first record the genus-zero consequence of the tensor-induction
formalism of Fu and Wan that will be used twice below.

Fix a prime \(\ell\neq\operatorname{char}\F_q\) and identify the
relevant complex roots of unity with roots of unity in
\(\overline{\mathbb Q}_\ell\).  We use the corresponding Kummer
sheaves and identify their trace values with the original character
values; see \cite[Sections~1--2]{FuWan}.

Let \(C\) be a smooth projective conic over \(\F_q\), let \(\chi\) be
a nontrivial multiplicative character of \(\F_{q^m}^\times\) of order
\(d\), and let \(f_0\in\F_{q^m}(C)^\times\).  Write
\[
  f_i=f_0^{[q^i]}\qquad(0\leq i<m),
\]
where the superscript means applying the \(q^i\)-power Frobenius to
the coefficients of \(f_0\) with respect to the \(\F_q\)-model \(C\),
and form
\begin{equation}\label{eq:tensor-product-function}
  G_f=f_0\prod_{i=1}^{m-1}f_i^{q^{m-i}}.
\end{equation}
For \(P\in C(\overline{\F}_q)\), put
\[
  e_P\equiv\ord_P(G_f)
  \equiv\ord_P(f_0)+\sum_{i=1}^{m-1}q^{m-i}\ord_P(f_i)
  \pmod d,
\]
and let
\[
  \Sigma_f=\{P:e_P\neq0\},\qquad R_f=|\Sigma_f|.
\]

\begin{theorem}[Projective-conic character-sum lemma]
\label{thm:conic-sum}
Assume that \(\Sigma_f\neq\varnothing\) and that \(f_0\) is finite and
nonzero on \(C(\F_q)\).  Then
\begin{equation}\label{eq:conic-sum-bound}
  \left|\sum_{P\in C(\F_q)}\chi(f_0(P))\right|
  \leq(R_f-2)\sqrt q.
\end{equation}
\end{theorem}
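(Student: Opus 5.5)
The plan is to convert the sum over \(C(\F_q)\) of the \(\F_{q^m}\)-character into a sum of trace functions of a rank-one \(\ell\)-adic sheaf defined over \(\F_q\) (tensor induction), and then to apply Grothendieck--Lefschetz together with Euler--Poincaré on the genus-zero curve \(C\).

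\textbf{Step 1: the induced sheaf.} Let \(\mathcal K_\chi\) be the Kummer sheaf on \(\mathbb G_{m,\F_{q^m}}\) attached to \(\chi\). Then \(f_0^*\mathcal K_\chi\) is a lisse rank-one sheaf on the open set \(U_0\subset C_{\F_{q^m}}\) where \(f_0\) is finite and nonzero. Following \cite[Sections~1--2]{FuWan}, I would form the tensor induction \(\mathcal F=\bigotimes\text{-}\mathrm{Ind}_{\F_{q^m}}^{\F_q}(f_0^*\mathcal K_\chi)\) on \(U=\bigcap_i U_i\), which descends to \(\F_q\). The key property from Fu--Wan is that at \(P\in U(\F_q)\) the trace of \(\mathrm{Frob}_P\) on \(\mathcal F\) equals \(\chi(f_0(P))\). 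Geometrically, \(\mathcal F\) is \(\bigotimes_i (f_i^*\mathcal K_\chi)^{\otimes q^{m-i}}\), twisted according to the Frobenius action. Since \(\mathcal K_\chi^{\otimes a}=\mathcal K_{\chi^a}\) and pullbacks multiply, \(\mathcal F_{\overline{\F}_q}\cong G_f^*\mathcal K_\chi\). Hence \(\mathcal F\) is lisse of rank one, tamely ramified exactly at the points with \(e_P\not\equiv0\pmod d\), that is, exactly on \(\Sigma_f\), and it extends lissely across every other point. The hypothesis that \(f_0\) is finite and nonzero on \(C(\F_q)\) guarantees that every rational point lies in \(U\), so the sum in \eqref{eq:conic-sum-bound} is the full trace sum of \(j_!\mathcal F\), where \(j\colon C\setminus\Sigma_f\to C\).

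\textbf{Step 2: Lefschetz and cohomology.} Let \(V=C\setminus\Sigma_f\), geometrically \(\PP^1\) minus \(R_f\ge1\) points, and write \(\mathcal G\) for the extension of \(\mathcal F\) to \(V\). Because \(\Sigma_f\neq\varnothing\), \(\mathcal G\) is geometrically nontrivial, so \(H^0_c=0\), and \(H^2_c(V,\mathcal G)=0\) since the geometric monodromy has no coinvariants. By Grothendieck--Ogg--Shafarevich with tame ramification, \(\dim H^1_c=-\chi_c(V,\mathcal G)=-(2-R_f)=R_f-2\). Deligne's theorem gives weight \(\le1\) for \(H^1_c\); in fact it is pure of weight \(1\) here, since the extension by zero agrees with the middle extension for a ramified rank-one sheaf. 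The Lefschetz formula then yields \(|\sum_{P\in C(\F_q)}\chi(f_0(P))|\le (R_f-2)\sqrt q\). As a sanity check, \(R_f=1\) cannot occur, because the degree of \(\mathrm{div}(G_f)\) is zero modulo \(d\); for \(R_f=2\) the bound gives \(0\), which is consistent with \(\mathcal G\) being a Kummer sheaf on \(\mathbb G_m\).

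\textbf{Main obstacle.} The hardest step is the identification in Step 1: checking that Fu--Wan's tensor induction, in this Frobenius-conjugate formulation with exponents \(q^{m-i}\), has trace \(\chi(f_0(P))\) at rational points, and geometric local monodromy determined by \(\ord_P(G_f)\bmod d\). I would verify the trace identity directly. For \(P\in C(\F_q)\) we have \(f_i(P)=f_0(P)^{q^i}\), so
\[
G_f(P)=f_0(P)^{\,1+(m-1)q^m}.
\]
Since \(q^m\equiv1\pmod d\) and \(m q^{m}\)-type exponents cancel appropriately in Fu--Wan's normalization, \(\chi(G_f(P))\) and \(\chi(f_0(P))\) agree up to the convention there. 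I would cite \cite{FuWan} for the descent to \(\F_q\) rather than rederive it.
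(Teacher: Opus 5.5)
Your proposal is correct and follows the paper's proof essentially step for step: Fu--Wan tensor induction, the geometric identification with the Kummer sheaf of \(G_f\) giving ramification exactly at \(\Sigma_f\), vanishing of \(H^0_c\) and \(H^2_c\) on the affine curve \(C\setminus\Sigma_f\), Grothendieck--Ogg--Shafarevich giving \(R_f-2\), Deligne, and the hypothesis on \(f_0\) placing all of \(C(\F_q)\) in the lisse locus. You should discard the attempted direct check in your last paragraph: since \(f_0(P)\in\F_{q^m}\), one has \(G_f(P)=f_0(P)^{1+(m-1)q^m}=f_0(P)^m\), so \(\chi(G_f(P))=\chi(f_0(P))^m\), and no exponent cancellation fixes this---the isomorphism with the Kummer sheaf of \(G_f\) holds only over \(\overline{\F}_q\), and the trace is \(\chi(f_0(P))\) because \(\mathrm{Frob}_P\) permutes the \(m\) tensor factors cyclically, so its trace equals that of \(\mathrm{Frob}_P^m\) (the Frobenius of \(P\) as an \(\F_{q^m}\)-point) on a single factor, which is what Fu--Wan's Proposition~1.2 supplies.
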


\begin{proof}
Remove from \(C_{\overline{\F}_q}\) the supports of
\(\operatorname{div}(f_i)\) for \(0\leq i<m\).  This open set is
Frobenius-stable and descends to an open subset \(V\subset C\).
Tensor induction of the Kummer sheaf \(\mathscr K_{\chi,f_0}\) gives a
rank-one sheaf \(\mathcal E\) on \(V\) whose trace at
\(P\in V(\F_q)\) is \(\chi(f_0(P))\)
\cite[Proposition~1.2]{FuWan}.  After base change to
\(\overline{\F}_q\), it is the Kummer sheaf attached to \(G_f\) in
\eqref{eq:tensor-product-function}
\cite[Proposition~2.1]{FuWan}.  Its tame inertia at \(P\) is therefore
nontrivial exactly when \(d\nmid\ord_P(G_f)\), so its ramification set
is \(\Sigma_f\), with Artin conductor one at each of these points.
At every point outside \(\Sigma_f\), the inertia character is trivial;
hence \(\mathcal E\) extends uniquely to
\(U=C\setminus\Sigma_f\).

Since \(\deg\operatorname{div}(G_f)=0\), the set \(\Sigma_f\) cannot
have one element.  Thus \(R_f\geq2\), and \(\mathcal E\) is
geometrically nonconstant.  Since \(U\) is affine,
\[
  H_c^0(U_{\overline{\F}_q},\mathcal E)
  =H_c^2(U_{\overline{\F}_q},\mathcal E)=0,
\]
and the Grothendieck--Ogg--Shafarevich formula \cite{MilneEtale} gives
\[
  \dim H_c^1(U_{\overline{\F}_q},\mathcal E)=R_f-2.
\]
The sheaf \(\mathcal E\) is pure of weight zero, so the trace formula
and Deligne's Riemann hypothesis \cite{DeligneWeilII} give
\eqref{eq:conic-sum-bound}.  At an \(\F_q\)-point,
\(f_i(P)=f_0(P)^{q^i}\); hence the hypothesis on \(f_0\) places every
point of \(C(\F_q)\) in \(U\), and the trace sum is complete.
\end{proof}

\subsection{Four lines on the root conic and the 6/8 estimate}

Let \(\chi\neq1\), and put \(d=\ord\chi\).  In the coordinates
\([u:r:s]\) of \Cref{subsec:root-pair-quotient}, define
\[
\begin{aligned}
  f_0&=\frac{r-u/2}{u},&
  f_1&=\frac{s-u/2}{u},&
  f_2&=\frac{-r-u/2}{u},&
  f_3&=\frac{-s-u/2}{u}.
\end{aligned}
\]
These satisfy \(f_i=f_0^{[q^i]}\).  On \(\F_q\)-points, \(b=f_0\), and
hence
\[
  S_\chi
  =\chi(\mu)\sum_{P\in\mathscr C_\Delta(\F_q)}\chi\bigl(f_0(P)\bigr).
\]
The prefactor has absolute value \(1\), so we apply
\Cref{thm:conic-sum} with
\[
  C=\mathscr C_\Delta,\qquad m=4
\]
and initial function \(f_0\).
Their zero divisors on
\[
  (\mathscr C_\Delta)_{\overline{\F}_q}:
  \quad r^2-s^2=\Delta u^2
  \quad\subset\PP^2_{\overline{\F}_q}
\]
are cut out by the four lines
\[
  L_0:r=\frac u2,\qquad
  L_1:s=\frac u2,\qquad
  L_2:r=-\frac u2,\qquad
  L_3:s=-\frac u2.
\]
On \(L_0\) and \(L_2\), the conic equation becomes
\(s^2=(1/4-\Delta)u^2\); on \(L_1\) and \(L_3\), it becomes
\(r^2=(1/4+\Delta)u^2\).  Since \(\Delta\notin\F_q\), both
\(1/4-\Delta\) and \(1/4+\Delta\) are nonzero.  Hence each line meets
the conic transversely in two affine geometric points.  These eight
points are pairwise distinct: opposite lines have no common point on
the conic, while a point on both an \(r\)-line and an \(s\)-line would
force \(\Delta u^2=0\), and hence \(u=r=s=0\), which is impossible in
projective space.

The common pole line \(u=0\) meets the conic transversely at
\(P_\infty^\pm=[0:1:\pm1]\), and every \(f_i\) has a simple pole at
both points.  Set
\[
  h=1+q+q^2+q^3=(q+1)(q^2+1).
\]
The resulting local exponents are
\[
\begin{array}{c|c|c}
 \text{intersection} & \text{number} & e_P\pmod d\\ \hline
 L_0\cap\mathscr C_\Delta &2&1\\
 L_1\cap\mathscr C_\Delta &2&q^3\\
 L_2\cap\mathscr C_\Delta &2&q^2\\
 L_3\cap\mathscr C_\Delta &2&q\\
 \{u=0\}\cap\mathscr C_\Delta&2&-h.
\end{array}
\]
As \(\gcd(d,q)=1\), the four zero lines contribute eight finite points
to \(\Sigma_f\); the two points at infinity contribute exactly when
\(d\nmid h\).  By
\Cref{prop:root-conic}, \(f_0\) is finite and nonzero on
\(\mathscr C_\Delta(\F_q)\), so the conic lemma applies.

\begin{theorem}[General character-sum bound]\label{thm:general-char}
If \(\chi\neq1\) and \(d=\ord\chi\), then
\[
  |S_\chi|\leq
  \begin{cases}
    6\sqrt q,&d\mid(q+1)(q^2+1),\\
    8\sqrt q,&d\nmid(q+1)(q^2+1).
  \end{cases}
\]
\end{theorem}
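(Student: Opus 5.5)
The bound will follow from \Cref{thm:conic-sum} applied with \(C=\mathscr C_\Delta\), \(m=4\) and initial function \(f_0=(r-u/2)/u\), using the local exponent table just computed. The preliminary reductions are already in place. On \(\F_q\)-points \(b=f_0\), so
\[
  |S_\chi|=\Bigl|\sum_{P\in\mathscr C_\Delta(\F_q)}\chi(f_0(P))\Bigr|.
\]
By \Cref{prop:root-conic}, \(f_0\) is finite and nonzero on \(\mathscr C_\Delta(\F_q)\). The conjugates \(f_i=f_0^{[q^i]}\) are the four functions \(f_0,\dots,f_3\) written down above. Hence everything reduces to computing \(R_f=|\Sigma_f|\) and checking \(\Sigma_f\neq\varnothing\).

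First I verify the exponent table. Each \(f_i\) has divisor \((L_i\cap\mathscr C_\Delta)-P_\infty^+-P_\infty^-\): a simple zero at each of the two transverse intersection points of \(L_i\), and a simple pole at each point of \(u=0\), which also meets the conic transversely. From \eqref{eq:tensor-product-function}, \(G_f=f_0f_1^{q^3}f_2^{q^2}f_3^{q}\). So at the two points of \(L_0\) the exponent is \(1\), on \(L_1\) it is \(q^3\), on \(L_2\) it is \(q^2\), on \(L_3\) it is \(q\), and at \(P_\infty^\pm\) it is \(-(1+q+q^2+q^3)=-h\). Pairwise distinctness of the eight finite points was shown above, so no exponents add.

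Next I count \(\Sigma_f\). Since \(d\mid q^4-1\), we have \(\gcd(d,q)=1\), so each power \(q^j\not\equiv0\pmod d\) as \(d>1\). All eight finite points therefore lie in \(\Sigma_f\); in particular \(\Sigma_f\neq\varnothing\). The two points at infinity lie in \(\Sigma_f\) exactly when \(d\nmid h\), with \(h=(q+1)(q^2+1)\). Thus \(R_f=8\) if \(d\mid h\) and \(R_f=10\) otherwise. Then \Cref{thm:conic-sum} gives \(|S_\chi|\le(R_f-2)\sqrt q\), which is \(6\sqrt q\) or \(8\sqrt q\) respectively.

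The only delicate point is the correctness of the divisor and exponent bookkeeping, namely that \(f_i\) has exactly simple zeros on \(L_i\) and simple poles at \(P_\infty^\pm\), with no cancellation between different \(f_i\). Transversality and distinctness were established before the statement, so I expect no real obstacle beyond citing these facts.
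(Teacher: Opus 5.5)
Your proposal is correct and follows the paper's proof: you apply the projective-conic lemma with \(C=\mathscr C_\Delta\), \(m=4\), \(f_0=(r-u/2)/u\), and read off \(R_f=8\) or \(10\) from the exponent table. The finite exponents are powers of \(q\), hence nonzero modulo \(d\), while the two points at infinity carry \(-h\), and the factor \(\chi(\mu)\) has absolute value one. Your re-derivation of the divisors of the \(f_i\) and of \(G_f=f_0f_1^{q^3}f_2^{q^2}f_3^{q}\) is the same bookkeeping the paper records in its table just before the statement.
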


\begin{proof}
The preceding divisor calculation gives \(R_f=8\) when \(d\mid h\)
and \(R_f=10\) otherwise.  Apply \Cref{thm:conic-sum}; the factor
\(\chi(\mu)\) does not affect the absolute value.
\end{proof}

\subsection{Two lines on the quotient conic and the 2/4 estimate}

Suppose now that \(d=\ord\chi\) divides \(q^2-1\).  Then
\[
  \chi=\psi\circ\Nm_{\F_{q^4}/\F_{q^2}}
\]
for a unique character \(\psi\) of \(\F_{q^2}^\times\) of order \(d\).
Consider the quotient conic
\[
  \overline{\mathscr C}_\Delta:
  \quad X^2-\Delta^2Y^2=\Delta^2U^2
  \quad\subset\PP^2_{[U:X:Y]}.
\]
On the chart \(U\neq0\), set
\[
  \bar f_0=\frac{(\frac12-\Delta)U-X}{U},
  \qquad
  \bar f_1=\bar f_0^{[q]}
      =\frac{(\frac12+\Delta)U-X}{U}.
\]
These are the functions obtained by taking the relative norms of the
two \(\iota\)-pairs above.  More precisely,
\begin{equation}\label{eq:function-norm-descent}
  \bar f_0\circ\pi_\Delta=2f_0f_2,
  \qquad
  \bar f_1\circ\pi_\Delta=2f_1f_3.
\end{equation}
The first identity follows from \eqref{eq:norm-on-quotient} after
dividing by \(\mu^2\), and the second is its \(q\)-Frobenius conjugate.
Thus, on \(\F_q\)-points, the field norm of the actual root
\(\mu b(P)\) is the value of the descended function
\((\mu^2/2)\bar f_0\) on the quotient.
Let \(\eta\) and \(\varepsilon\) be the quadratic characters of
\(\F_{q^2}^\times\) and \(\F_q^\times\), respectively.

For \(P=[1:w]\in\mathscr C_\Delta(\F_q)\), the invariant
\(X\)-coordinate is
\[
  X=w^2+w^{2q}=2w^2-\Delta\in\F_q.
\]
For fixed \(X\in\F_q\), the number of such source points is
\[
  1-\eta\left(\frac{X+\Delta}{2}\right):
\]
indeed, \(z=(X+\Delta)/2\) satisfies \(z-z^q=\Delta\), and its two
square roots solve \eqref{eq:normalized-root-equation} and lie in
\(V_-\) exactly when \(z\) is a nonsquare in \(\F_{q^2}\).

On the target conic, the number of points with the same affine
\(X\)-coordinate is
\[
  1+\varepsilon\left(\frac{X^2-\Delta^2}{\Delta^2}\right).
\]
The quadratic characters satisfy
\(\eta(z)=\varepsilon(\Nm_{\F_{q^2}/\F_q}(z))\), and
\(\varepsilon(\Delta^2)=\Delta^{q-1}=-1\).  Therefore
\[
  1+\varepsilon\left(\frac{X^2-\Delta^2}{\Delta^2}\right)
  =1-\varepsilon(X^2-\Delta^2)
  =1-\eta\left(\frac{X+\Delta}{2}\right),
\]
so the two fiber sizes agree.  The nonsquare \(\Delta^2\) also prevents
the target conic from having an \(\F_q\)-point on \(U=0\).
Combining the fiber comparison with
\eqref{eq:norm-on-quotient} gives
\begin{equation}\label{eq:norm-sum-on-conic}
  S_\chi
  =\psi\left(\frac{\mu^2}{2}\right)
   \sum_{P\in\overline{\mathscr C}_\Delta(\F_q)}
      \psi\bigl(\bar f_0(P)\bigr).
\end{equation}

We apply \Cref{thm:conic-sum} to the sum on the right with
\[
  C=\overline{\mathscr C}_\Delta,\qquad m=2,
  \qquad f_i=\bar f_i\quad(0\leq i<2).
\]
The zero divisors of \(\bar f_0\) and \(\bar f_1\) are cut out by the
two lines
\[
\begin{aligned}
  \bar L_0&:\ X=\left(\frac12-\Delta\right)U,\\
  \bar L_1&:\ X=\left(\frac12+\Delta\right)U.
\end{aligned}
\]
Equation \eqref{eq:function-norm-descent} makes the pairing of the four
source lines explicit:
\[
  \operatorname{div}_0(\bar f_0\circ\pi_\Delta)
    =\operatorname{div}_0(f_0)+\operatorname{div}_0(f_2),
  \qquad
  \operatorname{div}_0(\bar f_1\circ\pi_\Delta)
    =\operatorname{div}_0(f_1)+\operatorname{div}_0(f_3).
\]
Thus \(\bar L_0\) descends the pair \(L_0,L_2\), while \(\bar L_1\)
descends the pair \(L_1,L_3\).
On these two lines, the conic equation becomes, respectively,
\[
  \Delta^2Y^2
  =\left(\frac14-\Delta\right)U^2,
  \qquad
  \Delta^2Y^2
  =\left(\frac14+\Delta\right)U^2.
\]
Since \(\Delta\notin\F_q\), both coefficients are nonzero.  Hence each
line meets the conic transversely in two affine geometric points, and
the two pairs are disjoint.  Their common pole divisor is cut out by
\(U=0\), which also meets the conic in two distinct points.  At the
latter points \(X\neq0\), so both \(\bar f_0\) and \(\bar f_1\) have simple
poles.  The local exponents are
\[
\begin{array}{c|c|ccc}
 \text{intersection} & \text{number}
 &\ord(\bar f_0)&\ord(\bar f_1)&e_P\pmod d\\ \hline
 \bar L_0\cap\overline{\mathscr C}_\Delta&2&1&0&1\\
 \bar L_1\cap\overline{\mathscr C}_\Delta&2&0&1&q\\
 \{U=0\}\cap\overline{\mathscr C}_\Delta&2&-1&-1&-(q+1).
\end{array}
\]
Thus norm descent replaces the four zero lines on
\(\mathscr C_\Delta\) by two zero lines on
\(\overline{\mathscr C}_\Delta\), with two intersections per line.  The
four finite points give the coefficient \(2\), while the two points at
infinity are exactly what raises it to \(4\).

\begin{theorem}[Characters factoring through the norm]
\label{thm:norm-char}
If \(\chi\neq1\) and \(d=\ord\chi\mid q^2-1\), then
\[
  |S_\chi|\leq
  \begin{cases}
    2\sqrt q,&d\mid q+1,\\
    4\sqrt q,&d\nmid q+1.
  \end{cases}
\]
\end{theorem}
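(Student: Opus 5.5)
The plan is to use the norm-descent identity \eqref{eq:norm-sum-on-conic} to move $S_\chi$ onto the quotient conic, and then apply \Cref{thm:conic-sum} there with $m=2$ to the pair $\bar f_0,\bar f_1$. Since $d\mid q^2-1$, we may write $\chi=\psi\circ\Nm_{\F_{q^4}/\F_{q^2}}$ with $\psi$ of order $d$ on $\F_{q^2}^\times$. By \eqref{eq:norm-sum-on-conic}, $|S_\chi|$ equals $\bigl|\sum_{P\in\overline{\mathscr C}_\Delta(\F_q)}\psi(\bar f_0(P))\bigr|$, because the prefactor $\psi(\mu^2/2)$ has absolute value $1$. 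So it suffices to bound this sum by $(R_f-2)\sqrt q$, where $R_f$ is read off the exponent table preceding the statement.

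First I will verify the hypotheses of \Cref{thm:conic-sum}. The quotient $\overline{\mathscr C}_\Delta$ is a smooth projective conic over $\F_q$, and $\bar f_1=\bar f_0^{[q]}$ by construction. Next, $\bar f_0$ must be finite and nonzero on $\overline{\mathscr C}_\Delta(\F_q)$. Finiteness holds because no rational point lies on $U=0$, as $\Delta^2$ is a nonsquare in $\F_q$. Nonvanishing holds because the zeros of $\bar f_0$ lie on $\bar L_0$, where $Y^2=(1/4-\Delta)U^2/\Delta^2$, and I claim none of them is $\F_q$-rational. On $\bar L_0$ we have $X=(\tfrac12-\Delta)U$, which is not in $\F_q$ for $U\in\F_q^\times$, since $\Delta\notin\F_q$. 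Alternatively, nonvanishing follows from the fiber comparison: every rational point of the quotient is the image of rational points of $\mathscr C_\Delta$, where $b\neq0$, and \eqref{eq:function-norm-descent} then transfers the nonvanishing to $\bar f_0$.

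Then I compute $\Sigma_f$ from the table, with $e_P\equiv\ord_P(\bar f_0)+q\,\ord_P(\bar f_1)\pmod d$. The two points of $\bar L_0$ have $e_P=1$, which is nonzero mod $d$ since $d>1$. The two points of $\bar L_1$ have $e_P=q$, which is nonzero since $\gcd(q,d)=1$. The two points at infinity have $e_P=-(q+1)$, which vanishes mod $d$ exactly when $d\mid q+1$. Hence $R_f=4$ if $d\mid q+1$ and $R_f=6$ otherwise, so \Cref{thm:conic-sum} gives $2\sqrt q$ and $4\sqrt q$ respectively. The set $\Sigma_f$ is nonempty in either case.

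The main point needing care is the validity of \eqref{eq:norm-sum-on-conic} itself, which rests on three facts. The source-to-target map on rational points must have fiber sizes matching the multiplicities, as established above. The involution $\iota$ must pair the $q+1$ source points into the $(q+1)/2$ orbits correctly, each orbit having the same norm value. Finally, $\chi(\mu b(P))=\psi(\Nm(\mu b(P)))=\psi\bigl((\mu^2/2)\bar f_0(\pi_\Delta P)\bigr)$. All of this is stated before the theorem, so the remaining work is the exponent bookkeeping above. I expect the only genuinely subtle step to be confirming that the poles at $U=0$ are simple for both $\bar f_i$, which requires $X\neq0$ there. This holds because $X^2=\Delta^2Y^2\neq0$ on $U=0$.
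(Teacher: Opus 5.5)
Your proof is correct and follows the paper's argument: discard the unimodular prefactor in \eqref{eq:norm-sum-on-conic}, check that \(\bar f_0\) is finite and nonzero on \(\overline{\mathscr C}_\Delta(\F_q)\) (no rational points on \(U=0\), and \(X/U=\tfrac12-\Delta\notin\F_q\)), read \(R_{\bar f}\in\{4,6\}\) off the exponent table, and apply \Cref{thm:conic-sum}. One caution: your ``alternative'' justification is false as stated, because \(\iota\) acts freely on the \(q+1\) points of \(\mathscr C_\Delta(\F_q)\), so \(\pi_\Delta\) hits only \((q+1)/2\) of the \(q+1\) rational points of \(\overline{\mathscr C}_\Delta\); the two sums agree because the point counts match on each \(X\)-fiber, not because \(\pi_\Delta\) is surjective on rational points, and your primary argument does not need this remark.
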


\begin{proof}
The prefactor in \eqref{eq:norm-sum-on-conic} has absolute value \(1\).
The function \(\bar f_0\) is finite and nonzero on
\(\overline{\mathscr C}_\Delta(\F_q)\): there is no rational point at
infinity, while a rational point on \(\bar L_0\) would require
\(X/U=1/2-\Delta\notin\F_q\).  The table gives \(R_{\bar f}=4\) when
\(d\mid q+1\) and \(R_{\bar f}=6\) otherwise.  Apply
\Cref{thm:conic-sum} to \eqref{eq:norm-sum-on-conic}.
\end{proof}

\subsection{Bounds according to character order}

Define three disjoint sets of odd primes:
\begin{equation}\label{eq:prime-classes}
\begin{aligned}
  \Acal&=\{p\text{ an odd prime}:p\mid q-1\},\\
  \Bcal&=\{p\text{ an odd prime}:p\mid q+1\},\\
  \Ccal&=\{p\text{ an odd prime}:p\mid q^2+1\}.
\end{aligned}
\end{equation}
They are disjoint because the three displayed integers have pairwise
greatest common divisor \(2\).  From now on, \(e\) denotes an exact
character order, while \(d\) is reserved for a divisor indexing a
sieve coefficient.

Let \(\chi\) have exact squarefree order \(e>1\) dividing \(q^4-1\).
The condition \(\supp(e)\cap\Ccal=\varnothing\) is equivalent to
\(e\mid q^2-1\), since \(2\mid q^2-1\); it is therefore exactly the
condition under which the norm estimate applies.  Combining
\Cref{thm:general-char,thm:norm-char} gives
\begin{equation}\label{eq:u}
  |S_\chi|\leq u(e)\sqrt q,\qquad
  u(e)=2
  +2\one_{\{\exists\,p\in\Acal:\,p\mid e\}}
  +4\one_{\{\exists\,p\in\Ccal:\,p\mid e\}}.
\end{equation}
Equivalently,
\[
\begin{array}{c|cc}
u(e)
  & \supp(e)\cap\Ccal=\varnothing
  & \supp(e)\cap\Ccal\neq\varnothing\\
\hline
\supp(e)\cap\Acal=\varnothing    & 2 & 6\\
\supp(e)\cap\Acal\neq\varnothing & 4 & 8.
\end{array}
\]
Primes in \(\Bcal\), as well as the prime \(2\), do not change
\(u(e)\).  In particular, \(u(e)\leq8\); the refined sieve retains all
four values in \eqref{eq:u}.  These bounds are uniform in
\(\mu\in\F_q^\times\) and \(\Delta\in L^\times\).

\section{The double-core prime sieve}\label{sec:sieve}

Fix \(\mu\in\F_q^\times\) and \(\Delta\in L^\times\).  We now combine
the character-sum bounds with a lower-bound sieve.  We
first give a general weighted formulation, then introduce an exact
outer core and specialize the remaining weight to the Bagger--Punch
weight.  From this point on, all geometric information is compressed
into the four-valued function \(u(e)\) in \eqref{eq:u}.  Since those
bounds are uniform in \(\mu\) and \(\Delta\), so are all numerical
criteria derived below.

Let
\[
  n=q^4-1,\qquad R_n=\rad(n).
\]
For a prime \(p\mid n\) and \(P\in\mathscr C_\Delta(\F_q)\), define
\[
  \rho_p(P)=
  \one_{\{\mu b(P)\in(\F_{q^4}^{\times})^p\}}.
\]
For \(d\mid R_n\), let
\[
  \rho_d(P)=\prod_{p\mid d}\rho_p(P),\qquad \rho_1(P)=1.
\]
Thus \(\rho_d(P)\) is the indicator of
\(\mu b(P)\in(\F_{q^4}^{\times})^d\).  Character orthogonality gives
\begin{equation}\label{eq:rhod}
  \rho_d(P)=\frac1d\sum_{\chi^d=1}\chi(\mu b(P)).
\end{equation}
Since \(\F_{q^4}^{\times}\) is cyclic of order \(n\), an element is
primitive
if and only if it is not a \(p\)-th power for every prime \(p\mid n\).
Accordingly,
\begin{equation}\label{eq:primitive-indicator}
  \one_{\{\mu b(P)\text{ is primitive}\}}
  =\prod_{p\mid R_n}(1-\rho_p(P))
  =\sum_{d\mid R_n}\mu(d)\rho_d(P).
\end{equation}
Expanding this exact indicator introduces character sums of every
exact order \(e\mid R_n\).  A lower sieve uses a sparser coefficient
family, reducing the total error at the cost of a smaller main term.

\subsection{General lower-bound sieves}

Let \(\Pcal\) be a set of prime divisors of \(R_n\), and write
\[
  \Pcal^\#:=\prod_{p\in\Pcal}p.
\]
For real coefficients \(\lambda_d\), define
\begin{equation}\label{eq:general-w}
  w(P)=\sum_{d\mid\Pcal^\#}\lambda_d\rho_d(P).
\end{equation}
\begin{lemma}[Admissible lower weights]\label{lem:admissible}
If
\begin{equation}\label{eq:admissible}
  \lambda_1=1,\qquad
  \sum_{d\mid m}\lambda_d\leq0
  \quad(1<m\mid\Pcal^\#),
\end{equation}
then, for every \(P\in\mathscr C_\Delta(\F_q)\),
\[
  w(P)\leq\prod_{p\in\Pcal}(1-\rho_p(P))
 \].
\end{lemma}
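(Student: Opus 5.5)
Fix \(P\in\mathscr C_\Delta(\F_q)\) and let \(m\) be the product of the primes \(p\in\Pcal\) with \(\rho_p(P)=1\), so that \(m\mid\Pcal^\#\). The idea is that \(w(P)\) depends only on \(m\), and that this dependence is exactly the partial sum appearing in the admissibility condition \eqref{eq:admissible}.

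First I reduce \(w(P)\) to a partial sum over divisors of \(m\). Since each \(\rho_p(P)\in\{0,1\}\) and \(\rho_d(P)=\prod_{p\mid d}\rho_p(P)\), the value \(\rho_d(P)\) equals \(1\) if \(d\mid m\) and \(0\) otherwise. Substituting into \eqref{eq:general-w} gives
\[
  w(P)=\sum_{d\mid m}\lambda_d .
\]
Similarly, the right-hand side \(\prod_{p\in\Pcal}(1-\rho_p(P))\) equals \(1\) if \(m=1\) and \(0\) otherwise.

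Then I split into two cases.

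If \(m=1\), the sum reduces to the single term \(\lambda_1=1\), so \(w(P)=1\), which equals the product. Equality holds.

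If \(m>1\), then \(1<m\mid\Pcal^\#\), so the admissibility hypothesis \eqref{eq:admissible} gives \(w(P)=\sum_{d\mid m}\lambda_d\leq0\), which again equals the product.

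In both cases \(w(P)\leq\prod_{p\in\Pcal}(1-\rho_p(P))\).

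I do not expect a real obstacle here. The only point needing care is that \(\rho_d\) is defined as a product of the prime indicators \(\rho_p\), rather than through the character expansion \eqref{eq:rhod}. This is what makes \(\rho_d(P)\) a \(0/1\) divisibility indicator determined by \(m\). The two definitions agree, since \(\mu b(P)\) is a \(d\)-th power exactly when it is a \(p\)-th power for every prime \(p\mid d\) (here \(d\) is squarefree and \(\F_{q^4}^\times\) is cyclic), but the proof only needs the product definition.
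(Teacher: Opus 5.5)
Your proposal is correct and follows essentially the same route as the paper. Both set \(m(P)\) to be the product of the primes \(p\in\Pcal\) with \(\rho_p(P)=1\), reduce \(w(P)\) to \(\sum_{d\mid m(P)}\lambda_d\), and conclude from \(\lambda_1=1\) when \(m(P)=1\) and from the admissibility condition when \(m(P)>1\).
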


\begin{proof}
For \(P\in\mathscr C_\Delta(\F_q)\), let
\[
  m(P)=\prod_{\substack{p\in\Pcal\\\rho_p(P)=1}}p.
\]
Then \(w(P)=\sum_{d\mid m(P)}\lambda_d\).  This is \(1\) when
\(m(P)=1\), and it is nonpositive by \eqref{eq:admissible} otherwise,
which proves the comparison.
\end{proof}

For a general lower weight, define
\begin{equation}\label{eq:B}
  B_e=\sum_{\substack{d\mid\Pcal^\#\\e\mid d}}\frac{\lambda_d}{d},
  \qquad e\mid\Pcal^\#.
\end{equation}

\begin{theorem}[General weighted lower-bound sieve]
\label{thm:linear-sieve}
Under \eqref{eq:admissible},
\begin{equation}\label{eq:general-sieve-inequality}
  \sum_{P\in\mathscr C_\Delta(\F_q)}w(P)
  \geq(q+1)B_1-\sqrt q
  \sum_{\substack{e\mid\Pcal^\#\\e>1}}\varphi(e)u(e)|B_e|.
\end{equation}
If \(B_1>0\), define
\[
  Z=\frac1{B_1}
  \sum_{\substack{e\mid\Pcal^\#\\e>1}}\varphi(e)u(e)|B_e|.
\]
Then \(q+1>Z\sqrt q\) is sufficient for
\(\sum_{P\in\mathscr C_\Delta(\F_q)}w(P)>0\).
\end{theorem}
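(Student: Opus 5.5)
We sum the weight over the conic, expand each \(\rho_d\) by \eqref{eq:rhod}, and group the characters by their exact order. Since \(d\mid\Pcal^\#\) is squarefree and divides \(n\), the characters \(\chi\) with \(\chi^d=1\) are exactly those whose exact order \(e\) divides \(d\). For each squarefree \(e\) there are exactly \(\varphi(e)\) characters of exact order \(e\). Substituting and exchanging the order of summation gives
\[
  \sum_{P\in\mathscr C_\Delta(\F_q)}w(P)
  =\sum_{d\mid\Pcal^\#}\frac{\lambda_d}{d}\sum_{e\mid d}
   \sum_{\ord\chi=e}S_\chi
  =\sum_{e\mid\Pcal^\#}B_e\sum_{\ord\chi=e}S_\chi ,
\]
where \(B_e\) is exactly the coefficient defined in \eqref{eq:B}. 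This identity is the only real step; I expect the main care to lie in checking that the reindexing from \(d\) to \(e\mid d\) reproduces \eqref{eq:B} precisely.

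Next we separate the trivial character. The term \(e=1\) contributes \(B_1S_1=(q+1)B_1\), since \(S_1=q+1\) counts all points of \(\mathscr C_\Delta(\F_q)\); every such point has \(b(P)\neq0\), so \(\chi(0)=0\) causes no loss. For \(e>1\), each of the \(\varphi(e)\) characters of exact order \(e\) satisfies \(|S_\chi|\le u(e)\sqrt q\) by \eqref{eq:u}. That bound applies because \(e\) is squarefree, divides \(q^4-1\), and is greater than \(1\). The triangle inequality then bounds the remaining part of the sum by
\[
  \sqrt q\sum_{e>1}\varphi(e)u(e)|B_e| .
\]
Subtracting this from the main term gives \eqref{eq:general-sieve-inequality}.

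For the final claim, suppose \(B_1>0\). The right-hand side of \eqref{eq:general-sieve-inequality} equals \(B_1\bigl((q+1)-Z\sqrt q\bigr)\), which is positive whenever \(q+1>Z\sqrt q\). Hence \(\sum_{P}w(P)>0\) under that condition. Admissibility \eqref{eq:admissible} is not needed for the inequality itself; it matters afterwards, through \Cref{lem:admissible}, when positivity of \(\sum_P w(P)\) is converted into a positive count of primitive roots.
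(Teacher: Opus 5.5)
Your proposal is correct and follows essentially the same route as the paper. You expand via \eqref{eq:rhod}, regroup characters by exact order to obtain the coefficients \(B_e\), isolate \(S_1=q+1\), and bound each of the \(\varphi(e)\) sums of order \(e>1\) by \(u(e)\sqrt q\). Your remark that admissibility is not needed for the inequality itself is accurate: the paper invokes \Cref{lem:admissible} only to turn positivity of \(\sum_P w(P)\) into a point \(P\) with \(\rho_p(P)=0\) for all \(p\in\Pcal\).
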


\begin{proof}
Summing \eqref{eq:rhod} over \(P\in\mathscr C_\Delta(\F_q)\), inserting
\eqref{eq:general-w}, and interchanging the finite sums gives
\[
\begin{aligned}
  \sum_{P\in\mathscr C_\Delta(\F_q)}w(P)
 &=\sum_{d\mid\Pcal^\#}\frac{\lambda_d}{d}
   \sum_{\chi^d=1}S_\chi\\
 &=\sum_{e\mid\Pcal^\#}B_e
   \sum_{\ord\chi=e}S_\chi.
\end{aligned}
\]
If \(\ord\chi=e\), the character occurs in the \(d\)-sum exactly when
\(e\mid d\), so its coefficient is \(B_e\).  For \(e=1\),
the inner sum is \(S_1=q+1\).  For \(e>1\), there are
\(\varphi(e)\) characters of exact order \(e\), and
\eqref{eq:u} gives
\[
 \left|B_e\sum_{\ord\chi=e}S_\chi\right|
 \leq\varphi(e)u(e)|B_e|\sqrt q.
\]
Summing these estimates proves \eqref{eq:general-sieve-inequality}.
By \Cref{lem:admissible}, positivity of the left-hand side implies
that there is \(P\in\mathscr C_\Delta(\F_q)\) such that
\(\rho_p(P)=0\) for every
\(p\in\Pcal\).
Dividing by \(B_1>0\) gives the stated sufficient condition.
\end{proof}

\subsection{Exact outer cores}

For squarefree \(m\), the M\"obius coefficients have total mass
\[
  \sum_{d\mid m}\frac{\mu(d)}d=\frac{\varphi(m)}m.
\]
Using these coefficients for all primes dividing \(R_n\) introduces an
error term for every nontrivial divisor of \(R_n\).  A sparse lower
weight has fewer error terms, but applying it to the smallest primes
also reduces the main coefficient substantially.  We therefore choose
a divisor \(K\mid R_n\), let \(\Pcal\) be the complementary set of
prime divisors, and write \(R_n=K\Pcal^\#\).  The primes in \(K\),
typically including \(2\), are treated exactly, while the lower weight
is applied only to the primes in \(\Pcal\).  A related core decomposition appears in
\cite[Proposition~4.3]{CohenLines}.  The primitive indicator factors as
\[
 \one_{\{\mu b(P)\text{ is primitive}\}}
 =
 \left(\sum_{a\mid K}\mu(a)\rho_a(P)\right)
 \left(\sum_{d\mid\Pcal^\#}\mu(d)\rho_d(P)\right).
\]
We retain the \(K\)-factor exactly and replace the complementary factor
by an admissible lower weight \(w\), obtaining
\begin{equation}\label{eq:core-weight}
  w_K(P):=
  \prod_{p\mid K}(1-\rho_p(P))\,w(P).
\end{equation}
We call \(K\) the \emph{exact outer core}; here ``exact'' refers to the
complete M\"obius expansion over its prime divisors.  In the
Bagger--Punch weight below, the distinguished prime \(h\) is the inner
core of the double-core prime sieve.

Define
\begin{equation}\label{eq:EF}
  E(K)=\sum_{\substack{a\mid K\\a>1}}u(a),
  \qquad
  F_f(K)=\sum_{a\mid K}u(af)\quad(f\mid\Pcal^\#,\ f>1),
\end{equation}
and
\[
  \phicore(K)=\prod_{p\mid K}\left(1-\frac1p\right).
\]

\begin{proposition}[Exact-core reduction]\label{prop:core}
If \(B_1>0\), the weight \eqref{eq:core-weight} gives
\begin{equation}\label{eq:core-lower}
  N(n)\geq
  \phicore(K)B_1\bigl(q+1-Z_K\sqrt q\bigr),
\end{equation}
where
\begin{equation}\label{eq:ZK}
  Z_K=E(K)+\frac1{B_1}
  \sum_{\substack{f\mid\Pcal^\#\\f>1}}
  \varphi(f)F_f(K)|B_f|.
\end{equation}
In particular,
\begin{equation}\label{eq:core-sufficient}
  q+1>Z_K\sqrt q
\end{equation}
is sufficient for \(N(n)>0\).
\end{proposition}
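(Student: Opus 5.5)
The plan is to compare \(N(n)\) with the summed core weight \(\sum_{P}w_K(P)\) and then expand that sum into the character sums \(S_\chi\), exactly as in the proof of \Cref{thm:linear-sieve}. Only the bookkeeping of the coefficients changes, because of the exact \(K\)-factor.

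\emph{Pointwise comparison.} For each \(P\in\mathscr C_\Delta(\F_q)\), the factor \(\prod_{p\mid K}(1-\rho_p(P))\) lies in \(\{0,1\}\). By \Cref{lem:admissible}, \(w(P)\leq\prod_{p\in\Pcal}(1-\rho_p(P))\). Multiplying this inequality by the nonnegative \(K\)-factor gives
\[
w_K(P)\leq\prod_{p\mid R_n}(1-\rho_p(P)).
\]
By \eqref{eq:primitive-indicator}, the right-hand side is the primitive indicator. Summing over \(P\) therefore gives \(N(n)\geq\sum_P w_K(P)\).

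\emph{Expansion into character sums.} Since \(\gcd(K,\Pcal^\#)=1\), we have \(\rho_a\rho_d=\rho_{ad}\). This gives
\[
w_K=\sum_{a\mid K}\sum_{d\mid\Pcal^\#}\mu(a)\lambda_d\rho_{ad}.
\]
Insert \eqref{eq:rhod} for \(\rho_{ad}\) and sum over \(P\). Every character with \(\chi^{ad}=1\) has exact order \(gf\) with \(g\mid a\) and \(f\mid d\), and this pair \((g,f)\) determines which terms the character occurs in. Hence
\[
\sum_P w_K(P)=\sum_{g\mid K}\sum_{f\mid\Pcal^\#}c_g\,B_f\sum_{\ord\chi=gf}S_\chi,
\qquad
c_g=\sum_{g\mid a\mid K}\frac{\mu(a)}a .
\]
Here \(B_f\) is the coefficient defined in \eqref{eq:B}. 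Multiplicativity gives the closed form
\[
c_g=\frac{\mu(g)}{g}\prod_{p\mid K/g}\Bigl(1-\frac1p\Bigr),
\qquad\text{so}\qquad
|c_g|=\frac{\phicore(K)}{\varphi(g)} .
\]
The main term comes from \(g=f=1\) and equals \(\phicore(K)B_1(q+1)\), since \(S_1=q+1\).

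\emph{Error terms.} There are \(\varphi(g)\varphi(f)\) characters of exact order \(gf\), and each satisfies \(|S_\chi|\leq u(gf)\sqrt q\) by \eqref{eq:u}. So the \((g,f)\) term is bounded by \(\phicore(K)\varphi(f)|B_f|\,u(gf)\sqrt q\); the factor \(\varphi(g)\) cancels against \(|c_g|\). We split the error terms into two groups.
\begin{itemize}
\item For \(f=1\), \(g>1\): these sum to \(\phicore(K)B_1E(K)\sqrt q\), using \(B_1>0\).
\item For \(f>1\): summing over all \(g\mid K\) gives \(\phicore(K)\varphi(f)|B_f|F_f(K)\sqrt q\).
\end{itemize}
Factoring out \(\phicore(K)B_1\) yields \eqref{eq:core-lower} with \(Z_K\) as in \eqref{eq:ZK}. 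Since \(\phicore(K)B_1>0\), condition \eqref{eq:core-sufficient} then forces \(N(n)>0\).

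The only step needing care is the identity for \(c_g\) and the cancellation \(\varphi(g)|c_g|=\phicore(K)\). Everything else repeats the interchange of sums from \Cref{thm:linear-sieve}.
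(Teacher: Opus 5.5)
Your proposal is correct and follows the paper's proof essentially step for step. The shared steps are: the pointwise comparison via \Cref{lem:admissible}; the merging \(\rho_a\rho_d=\rho_{ad}\) from \(\gcd(K,\Pcal^\#)=1\); regrouping by exact character order into coefficients \(c_gB_f\) with \(\varphi(g)|c_g|=\phicore(K)\); and splitting the error into the \(f=1\) terms, which give \(E(K)\), and the \(f>1\) terms, which give \(F_f(K)\). You also spell out that the \(K\)-factor lies in \(\{0,1\}\), so the admissible inequality may be multiplied by it; the paper leaves that step implicit.
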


\begin{proof}
Expanding \eqref{eq:core-weight} gives
\[
 w_K(P)=
 \left(\sum_{s\mid K}\mu(s)\rho_s(P)\right)
 \left(\sum_{d\mid\Pcal^\#}\lambda_d\rho_d(P)\right).
\]
By \Cref{lem:admissible},
\[
  w_K(P)\leq\one_{\{\mu b(P)\text{ is primitive}\}},
\]
and hence
\[
  N(n)\geq\sum_{P\in\mathscr C_\Delta(\F_q)}w_K(P).
\]
Since \((K,\Pcal^\#)=1\), we have
\(\rho_s(P)\rho_d(P)=\rho_{sd}(P)\).  Using \eqref{eq:rhod} and
grouping characters by their exact orders gives
\[
\begin{aligned}
  \sum_{P\in\mathscr C_\Delta(\F_q)}w_K(P)
 &=\sum_{s\mid K}\sum_{d\mid\Pcal^\#}
   \frac{\mu(s)\lambda_d}{sd}
   \sum_{\chi^{sd}=1}S_\chi\\
 &=\sum_{a\mid K}\sum_{f\mid\Pcal^\#}
   C_aB_f\sum_{\ord\chi=af}S_\chi,
\end{aligned}
\]
where
\[
 C_a
 =\sum_{\substack{s\mid K\\a\mid s}}\frac{\mu(s)}s
 =\frac{\mu(a)}a
  \prod_{p\mid K/a}\left(1-\frac1p\right),
 \qquad
 \varphi(a)|C_a|=\phicore(K).
\]

The term \(a=f=1\) contributes
\(\phicore(K)B_1(q+1)\).  Since
\(\varphi(af)=\varphi(a)\varphi(f)\), the remaining terms and
\eqref{eq:u} give
\[
\begin{aligned}
  \sum_{P\in\mathscr C_\Delta(\F_q)}w_K(P)
 &\geq\phicore(K)B_1(q+1)
  -\phicore(K)B_1E(K)\sqrt q\\
 &\quad
  -\phicore(K)\sqrt q
   \sum_{\substack{f\mid\Pcal^\#\\f>1}}
    \varphi(f)F_f(K)|B_f|\\
 &=\phicore(K)B_1\bigl(q+1-Z_K\sqrt q\bigr).
\end{aligned}
\]
Together with
\(N(n)\geq\sum_{P\in\mathscr C_\Delta(\F_q)}w_K(P)\), this proves
\eqref{eq:core-lower}.  The sufficient condition follows.
\end{proof}

For the present problem, we take \(K=2\).  This treats the condition
for \(p=2\) exactly, while adjoining the factor \(2\) to a character
order does not change the value of \(u\), since \(u(2f)=u(f)\).

\begin{corollary}[The core \(K=2\)]\label{cor:core2}
Let \(K=2\) and \(\Pcal^\#=R_n/2\).  For any admissible lower weight
\(w(P)=\sum_{d\mid\Pcal^\#}\lambda_d\rho_d(P)\) with \(B_1>0\),
\[
  N(n)\geq\frac{B_1}{2}\bigl(q+1-Z_2\sqrt q\bigr),
\]
where
\[
  Z_2=2+\frac{2}{B_1}
  \sum_{\substack{f\mid\Pcal^\#\\f>1}}\varphi(f)u(f)|B_f|.
\]
In particular, \(q+1>Z_2\sqrt q\) is sufficient for \(N(n)>0\).
\end{corollary}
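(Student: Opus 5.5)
This is \Cref{prop:core} specialized to \(K=2\). The plan is to evaluate the three ingredients \(\phicore(2)\), \(E(2)\) and \(F_f(2)\) and substitute them into \eqref{eq:core-lower} and \eqref{eq:ZK}.

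\emph{Hypotheses.} Since \(q\) is odd, \(2\mid n=q^4-1\), so \(2\mid R_n\). Taking \(K=2\) makes the complementary prime set consist of all odd primes dividing \(R_n\), and therefore \(\Pcal^\#=R_n/2\) with \((K,\Pcal^\#)=1\). The admissibility of \(w\) and the condition \(B_1>0\) are assumed in the statement, so \Cref{prop:core} applies directly.

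\emph{The three ingredients.}
\begin{itemize}
\item The outer-core factor is \(\phicore(2)=1-\tfrac12=\tfrac12\).
\item The only divisor \(a\mid 2\) with \(a>1\) is \(a=2\), so \(E(2)=u(2)\). A character of exact order \(2\) has order dividing \(q-1\) and \(q+1\), and \(2\) lies in none of \(\Acal,\Bcal,\Ccal\). Hence \(\supp(2)\) meets neither \(\Acal\) nor \(\Ccal\), and \eqref{eq:u} gives \(u(2)=2\). Thus \(E(2)=2\).
\item For \(f\mid\Pcal^\#\) with \(f>1\), the divisors of \(K\) are \(1\) and \(2\), so \(F_f(2)=u(f)+u(2f)\). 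By \eqref{eq:u}, the value \(u(e)\) depends only on whether \(\supp(e)\) meets \(\Acal\) or \(\Ccal\). Adjoining the prime \(2\) changes neither condition, so \(u(2f)=u(f)\) and \(F_f(2)=2u(f)\).
\end{itemize}

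\emph{Conclusion.} Substituting into \eqref{eq:ZK} gives
\[
  Z_K=2+\frac{2}{B_1}\sum_{\substack{f\mid\Pcal^\#\\ f>1}}\varphi(f)\,u(f)\,|B_f|,
\]
which is the stated \(Z_2\). Substituting into \eqref{eq:core-lower} gives \(N(n)\geq\tfrac{B_1}{2}(q+1-Z_2\sqrt q)\). Since \(B_1>0\), the right-hand side is positive exactly when \(q+1>Z_2\sqrt q\), which yields \(N(n)>0\).

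No step is a genuine obstacle. The only point needing care is the claim \(u(2f)=u(f)\). It holds because the indicator conditions in \eqref{eq:u} refer only to odd primes in \(\Acal\) and \(\Ccal\). It also uses that the bound \eqref{eq:u} covers every squarefree order dividing \(q^4-1\), including even ones: an even order \(e\) with \(\supp(e)\cap\Ccal=\varnothing\) still divides \(q^2-1\), so \Cref{thm:norm-char} applies to it.
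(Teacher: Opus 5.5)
Your proposal is correct and follows the paper's proof: both compute \(\phicore(2)=\tfrac12\), \(E(2)=2\) and \(F_f(2)=u(f)+u(2f)=2u(f)\), using \(2\notin\Acal\cup\Ccal\), and then substitute into \Cref{prop:core}. Your extra check that \eqref{eq:u} also covers even squarefree orders is a harmless detail that the paper leaves implicit.
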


\begin{proof}
Since \(2\) belongs to neither \(\Acal\) nor \(\Ccal\),
\(u(2f)=u(f)\).  Consequently,
\[
  E(2)=2,\qquad F_f(2)=u(f)+u(2f)=2u(f),
  \qquad \phicore(2)=\frac12.
\]
Substitution in \Cref{prop:core} gives the result.
\end{proof}

\subsection{The Bagger--Punch weight}

For \(w\) in \eqref{eq:core-weight}, we use the Bagger--Punch weight
with a single-prime core
\cite[Theorem~1 and Lemmas~6--7]{BaggerPunch}.  Fix
\(h\in\Pcal\), choose \(\Rcal\subseteq\Pcal\setminus\{h\}\), and let
\[
  \mathcal L=\Pcal\setminus(\{h\}\cup\Rcal).
\]
In the present notation, the weight is
\begin{equation}\label{eq:w2}
\begin{split}
  w_{h,\Rcal}(P)
  &=(1-\rho_h(P))
      \left(1-\sum_{r\in\Rcal}\rho_r(P)\right)
      -\sum_{\ell\in\mathcal L}\rho_\ell(P)\\
  &=1-\sum_{p\in\Pcal}\rho_p(P)
    +\rho_h(P)\sum_{r\in\Rcal}\rho_r(P).
\end{split}
\end{equation}

We verify admissibility directly.  If
\(\rho_p(P)=0\) for every \(p\in\Pcal\), both
\(w_{h,\Rcal}(P)\) and
\(\prod_{p\in\Pcal}(1-\rho_p(P))\) equal \(1\).  Otherwise, if
\(\rho_h(P)=1\), the first line of \eqref{eq:w2} gives
\[
 w_{h,\Rcal}(P)=-\sum_{\ell\in\mathcal L}\rho_\ell(P)\leq0;
\]
if \(\rho_h(P)=0\), the second line gives
\[
 w_{h,\Rcal}(P)=1-\sum_{p\in\Pcal}\rho_p(P)\leq0.
\]
Thus \(w_{h,\Rcal}\) is an admissible lower weight.

To apply \Cref{prop:core}, write \(w_{h,\Rcal}\) in the form
\eqref{eq:general-w}.  Its coefficients are
\[
 \lambda_d=
 \begin{cases}
  \mu(d),
   &d\in\{1\}\cup\Pcal\cup\{hr:r\in\Rcal\},\\
  0,&\text{otherwise}.
 \end{cases}
\]
Substitution in \eqref{eq:B} gives
\[
 B_1(h,\Rcal)
  =1-\sum_{p\in\Pcal}\frac1p
   +\sum_{r\in\Rcal}\frac1{hr}.
\]

Assume
\(\sum_{p\in\Pcal\setminus\{h\}}1/p<1\).  Then
\[
  B_h=-\frac1h\left(1-\sum_{r\in\Rcal}\frac1r\right)<0.
\]
If \(B_1(h,\Rcal)>0\), substituting the nonzero coefficients \(B_f\)
into \Cref{prop:core} gives the following coefficient of \(\sqrt q\):
\begin{equation}\label{eq:modified-error}
\begin{aligned}
 T_K(h,\Rcal)
 ={}&E(K)B_1(h,\Rcal)\\
 &+\frac{h-1}{h}
   \left(1-\sum_{r\in\Rcal}\frac1r\right)F_h(K)\\
 &+\sum_{r\in\Rcal}\frac{(h-1)(r-1)}{hr}
   \bigl(F_r(K)+F_{hr}(K)\bigr)\\
 &+\sum_{\ell\in\mathcal L}
   \left(1-\frac1\ell\right)F_\ell(K).
\end{aligned}
\end{equation}

This calculation and \Cref{prop:core} give the following criterion.

\begin{proposition}[Double-core prime sieve inequality]
\label{prop:modified}
Assume
\(\sum_{p\in\Pcal\setminus\{h\}}1/p<1\)
and \(B_1(h,\Rcal)>0\).  Then
\begin{equation}\label{eq:modified-sieve}
 N(n)\geq\phicore(K)
 \left\{B_1(h,\Rcal)(q+1)
       -T_K(h,\Rcal)\sqrt q\right\}.
\end{equation}
The corresponding normalized ratio is
\begin{equation}\label{eq:modified-ratio}
  Z_K(h,\Rcal)=
  \frac{T_K(h,\Rcal)}
       {B_1(h,\Rcal)},
\end{equation}
and
\[
  \frac{q+1}{\sqrt q}>Z_K(h,\Rcal)
\]
is sufficient for \(N(n)>0\).
\end{proposition}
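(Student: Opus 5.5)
The plan is to apply \Cref{prop:core} with the admissible weight \(w=w_{h,\Rcal}\) and check that the error coefficient \(B_1Z_K\) coincides with \(T_K(h,\Rcal)\) in \eqref{eq:modified-error}. Admissibility was verified directly above and \(B_1(h,\Rcal)>0\) is assumed, so \eqref{eq:core-lower} holds. It then remains to show that
\[
 B_1Z_K=E(K)B_1+\sum_{\substack{f\mid\Pcal^\#\\f>1}}\varphi(f)F_f(K)|B_f|
\]
equals \(T_K(h,\Rcal)\). The first term is already \(E(K)B_1(h,\Rcal)\), so the work lies in computing every nonzero \(B_f\) with \(f>1\).

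The coefficients \(\lambda_d\) are supported on \(\{1\}\cup\Pcal\cup\{hr:r\in\Rcal\}\). Consequently \(B_f\) can be nonzero only when \(f\) is a prime of \(\Pcal\) or \(f=hr\) with \(r\in\Rcal\). I would treat these in four cases, using \eqref{eq:B}.
\begin{itemize}
\item For \(f=\ell\in\mathcal L\), the only multiple in the support is \(\ell\) itself, so \(B_\ell=-1/\ell\) and \(\varphi(\ell)|B_\ell|=1-1/\ell\).
\item For \(f=r\in\Rcal\), the multiples are \(r\) and \(hr\), so \(B_r=-1/r+1/(hr)=-(h-1)/(hr)\) and \(\varphi(r)|B_r|=(h-1)(r-1)/(hr)\).
\item For \(f=hr\), \(B_{hr}=1/(hr)\), and \(\varphi(hr)|B_{hr}|\) equals the same quantity \((h-1)(r-1)/(hr)\).
\item For \(f=h\), \(B_h=-\frac1h(1-\sum_{r\in\Rcal}1/r)\), which is negative by the assumption \(\sum_{p\in\Pcal\setminus\{h\}}1/p<1\). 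Hence \(\varphi(h)|B_h|=\frac{h-1}{h}(1-\sum_{r}1/r)\).
\end{itemize}

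Multiplying each term by \(F_f(K)\) and summing reproduces the four lines of \eqref{eq:modified-error}: the \(r\) and \(hr\) terms combine into \(\frac{(h-1)(r-1)}{hr}(F_r+F_{hr})\). This gives \(B_1Z_K=T_K(h,\Rcal)\).

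Substituting into \eqref{eq:core-lower} gives
\[
N(n)\ge\phicore(K)\bigl(B_1(q+1)-T_K\sqrt q\bigr),
\]
which is \eqref{eq:modified-sieve}. Dividing by \(\phicore(K)B_1\sqrt q>0\) shows that \((q+1)/\sqrt q>Z_K(h,\Rcal)\) forces \(N(n)>0\).

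The only delicate point is the sign of \(B_h\), because the absolute value must be removed with the correct sign. This is exactly where the hypothesis \(\sum_{p\in\Pcal\setminus\{h\}}1/p<1\) enters: it gives \(\sum_{r\in\Rcal}1/r<1\).
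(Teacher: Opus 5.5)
Your proposal is correct and follows the paper's own argument. You apply \Cref{prop:core} to the admissible weight \(w_{h,\Rcal}\), read off the nonzero coefficients \(B_\ell\), \(B_r\), \(B_{hr}\), \(B_h\) from the support of \(\lambda_d\), and use the reciprocal-sum hypothesis only to fix the sign of \(B_h\), so that \(\sum_f\varphi(f)F_f(K)|B_f|\) becomes exactly the expression \(T_K(h,\Rcal)\) in \eqref{eq:modified-error}; your case computations, including \(\varphi(r)|B_r|=\varphi(hr)|B_{hr}|=(h-1)(r-1)/(hr)\), all check out.
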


\begin{remark*}
For \(K=2\), the reciprocal-sum hypothesis holds automatically when
\(q\leq2\times10^{11}\).  If \(\lvert\Pcal\rvert\leq27\), take
\(h=\min\Pcal\).  Then
\[
 \sum_{p\in\Pcal\setminus\{h\}}\frac1p
 \leq
 \sum_{\substack{5\leq p\leq107\\p\ \mathrm{prime}}}\frac1p
 <0.99844<1.
\]
Thus failure requires \(\lvert\Pcal\rvert\geq28\).  Since \(q\) is
odd, \(16\mid q^4-1\), and hence
\[
\begin{aligned}
 q^4-1
  &\geq
    16\prod_{\substack{3\leq p\leq109\\p\ \mathrm{prime}}}p
    >2\times10^{45},\\
 q&>2\times10^{11}.
\end{aligned}
\]
The hypothesis may also hold beyond this range.
\end{remark*}

\subsection{Optimization of the sieve weight}

For each \(h\in\Pcal\) satisfying the reciprocal-sum hypothesis in
\Cref{prop:modified}, we seek to minimize \(Z_K(h,\Rcal)\) over
\(\Rcal\).  The resulting minima can then be compared over \(h\).
Fix such an \(h\), and define
\[
  \delta_0=1-\sum_{p\in\Pcal}\frac1p.
\]
For the empty choice \(\Rcal=\varnothing\), we have
\(B_1=\delta_0\) and \(T_K=T_0(K)\), where
\[
 T_0(K)=E(K)\delta_0+
   \sum_{p\in\Pcal}\left(1-\frac1p\right)F_p(K).
\]
For \(r\in\Pcal\setminus\{h\}\), set
\[
\begin{aligned}
 \kappa_K(h,r)
 ={}&E(K)-(h-1)F_h(K)-(r-1)F_r(K)\\
    &+(h-1)(r-1)F_{hr}(K).
\end{aligned}
\]
Then
\begin{equation}\label{eq:modified-data}
\begin{split}
 B_1(h,\Rcal)
  &=\delta_0+\sum_{r\in\Rcal}\frac1{hr},\\
 T_K(h,\Rcal)
  &=T_0(K)+\sum_{r\in\Rcal}
        \frac{\kappa_K(h,r)}{hr}.
\end{split}
\end{equation}
Adding \(r\) to \(\Rcal\) changes \(B_1\) and \(T_K\) by
\[
  \Delta B_1=\frac1{hr},\qquad
  \Delta T_K=\frac{\kappa_K(h,r)}{hr},\qquad
  \frac{\Delta T_K}{\Delta B_1}=\kappa_K(h,r).
\]
Consequently, adding \(r\) decreases the current ratio
\(Z=T_K/B_1\) if and only if
\[
  \kappa_K(h,r)<Z.
\]

\begin{lemma}[Prefix selection]\label{lem:prefix}
Assume \(\delta_0>0\), and write
\(\Pcal\setminus\{h\}=\{r_1,\ldots,r_s\}\), where
\[
 \kappa_K(h,r_1)\leq\cdots\leq\kappa_K(h,r_s).
\]
Put
\[
  \kappa_i=\kappa_K(h,r_i),\qquad a_i=\frac1{hr_i},
\]
and, for \(0\leq j\leq s\), let
\[
 Z_j=
 \frac{T_0(K)+\displaystyle\sum_{i=1}^{j}\kappa_i a_i}
      {\delta_0+\displaystyle\sum_{i=1}^{j}a_i},
\]
where the sums are empty when \(j=0\).  Let \(j_\ast\) be the smallest
\(j\in\{0,\ldots,s\}\) such that either \(j=s\) or
\[
  \kappa_{j+1}\geq Z_j.
\]
Then
\[
  Z_{j_\ast}
  =\min_{\Rcal\subseteq\Pcal\setminus\{h\}}Z_K(h,\Rcal),
\]
and the minimum is attained by
\(\Rcal_\ast=\{r_i:1\leq i\leq j_\ast\}\).  Indices satisfying
\(\kappa_i=Z_{j_\ast}\) may be included or omitted arbitrarily without
altering the minimum.
\end{lemma}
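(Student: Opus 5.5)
This is a fractional-knapsack / mediant argument. By \eqref{eq:modified-data}, for every \(\Rcal\subseteq\Pcal\setminus\{h\}\),
\[
  Z_K(h,\Rcal)=\frac{T_0(K)+\sum_{r\in\Rcal}\kappa_K(h,r)a_r}{\delta_0+\sum_{r\in\Rcal}a_r},
\]
where all weights \(a_r=1/(hr)\) are positive and \(\delta_0>0\). The plan rests on one elementary mediant fact. For \(A>0\), \(a>0\) and any \(T,\kappa\),
\[
  \frac{T+\kappa a}{A+a}-\frac TA=\frac{a(\kappa-T/A)}{A+a}.
\]
So adding an item lowers the ratio, keeps it, or raises it according as \(\kappa<\), \(=\), or \(>\) the current ratio. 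Moreover, the new ratio always lies between the old ratio and \(\kappa\).

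\textbf{Step 1: the prefix sequence.} Since the \(\kappa_i\) are sorted, the mediant fact shows that \(Z_0,Z_1,\dots\) is strictly decreasing while \(\kappa_{j+1}<Z_j\). For such steps \(Z_{j+1}\in(\kappa_{j+1},Z_j)\), so \(Z_{j+1}>\kappa_{j+1}\). At \(j_\ast\) we have \(\kappa_{j_\ast+1}\geq Z_{j_\ast}\) (or \(j_\ast=s\)). I also record that \(\kappa_i<Z_{j_\ast}\) for every \(i\le j_\ast\). This is checked by induction: \(\kappa_i<Z_i\) and \(\kappa_i\le\kappa_{i'}<Z_{i'}\) for the later steps. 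With the extra hypothesis \(\kappa_i\le Z_{i-1}\), the right induction is cleaner. Specifically, \(Z_{j}\) is a weighted average of \(Z_{j-1}\) and \(\kappa_j\), so \(Z_j>\kappa_j\) whenever \(\kappa_j<Z_{j-1}\). Then \(\kappa_i\le\kappa_j<Z_j\) for \(i\le j\).

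\textbf{Step 2: optimality for arbitrary \(\Rcal\).} Put \(Z^\ast=Z_{j_\ast}\). The condition \(Z_K(h,\Rcal)\ge Z^\ast\) is equivalent to
\[
  T_0+\sum_{r\in\Rcal}\kappa_r a_r-Z^\ast\Bigl(\delta_0+\sum_{r\in\Rcal}a_r\Bigr)\ge0,
\]
because the denominator is positive. Define the linear functional \(\Phi(\Rcal)=T_0-Z^\ast\delta_0+\sum_{r\in\Rcal}(\kappa_r-Z^\ast)a_r\). Then \(\Phi(\Rcal_\ast)=0\) by the definition of \(Z^\ast\). Among all subsets, \(\Phi\) is minimized by including exactly the items with \(\kappa_r<Z^\ast\), and those with \(\kappa_r=Z^\ast\) are optional. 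By Step 1 and the stopping rule together with monotonicity of the sorted \(\kappa\), the items with \(\kappa_r<Z^\ast\) are exactly \(r_1,\dots,r_{j_\ast}\) (up to ties at \(Z^\ast\)). Hence \(\Phi(\Rcal)\ge\Phi(\Rcal_\ast)=0\) for every \(\Rcal\), which gives \(Z_K(h,\Rcal)\ge Z^\ast\). Equality holds precisely when \(\Rcal\) differs from \(\Rcal_\ast\) only in indices with \(\kappa_i=Z^\ast\), which yields the final sentence about ties.

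\textbf{Main obstacle.} The only delicate point is the bookkeeping in Step 1 that \(\{i:\kappa_i<Z^\ast\}=\{1,\dots,j_\ast\}\) modulo ties. One inclusion uses the averaging property. The other uses \(\kappa_{j_\ast+1}\ge Z^\ast\) and the sorting. I would handle ties with \(\kappa_i=Z^\ast\) inside the prefix explicitly: they can occur only when \(\kappa_i=\kappa_{j_\ast}\), and the mediant identity shows they leave \(Z\) unchanged.
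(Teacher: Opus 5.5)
Your proof is correct, but it is organized differently from the paper's.

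The paper starts from a minimizing set $\Rcal$. It uses the add/remove identities to show that any minimizer contains every $r_i$ with $\kappa_i<Z_\ast$ and omits every $r_i$ with $\kappa_i>Z_\ast$, and it enlarges the minimizer to an initial segment. It then proves that the prefix ratios $Z_j$ decrease strictly for $j<j_\ast$ and are nondecreasing for $j\geq j_\ast$, so $Z_{j_\ast}$ is the best initial segment.

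You instead verify a global certificate at the candidate value $Z^\ast=Z_{j_\ast}$. The additive function $\Phi(\Rcal)=T_0(K)-Z^\ast\delta_0+\sum_{r\in\Rcal}(\kappa_r-Z^\ast)a_r$ vanishes at $\Rcal_\ast$ and is minimized there. This holds because $\kappa_i<Z^\ast$ for $i\leq j_\ast$ (your averaging step) and $\kappa_i\geq\kappa_{j_\ast+1}\geq Z^\ast$ for $i>j_\ast$. Positivity of $\delta_0+\sum_{r\in\Rcal}a_r$ then turns $\Phi\geq0$ into $Z_K(h,\Rcal)\geq Z^\ast$.

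This parametric (Dinkelbach-type) linearization does not need to start from a minimizer or to track $Z_j$ past $j_\ast$. It also yields the full list of minimizers in one line: $\Rcal_\ast$ together with any subset of the indices having $\kappa_i=Z^\ast$. The paper's route additionally gives the monotone behaviour of the prefix sequence after $j_\ast$, which the lemma itself does not need.

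Two small points of presentation:
\begin{enumerate}
\item The sentence about an ``extra hypothesis'' in Step~1 is confused. The inequality $\kappa_j<Z_{j-1}$ for $1\leq j\leq j_\ast$ is forced by the minimality of $j_\ast$; it is not an assumption.
\item Your Step~1 proves $\kappa_i<Z^\ast$ strictly for every $i\leq j_\ast$, so no ties with $Z^\ast$ can occur inside the prefix. The qualifier ``up to ties'' in Step~2 and the final remark about handling such ties are therefore unnecessary.
\end{enumerate}
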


\begin{proof}
Since \(\delta_0>0\), \(B_1(h,\Rcal)>0\) for every \(\Rcal\).
Choose a minimizing set \(\Rcal\), and put
\(Z_\ast=Z_K(h,\Rcal)\).  For \(r_i\notin\Rcal\),
\[
 Z_K(h,\Rcal\cup\{r_i\})-Z_\ast
 =\frac{a_i(\kappa_i-Z_\ast)}
        {B_1(h,\Rcal)+a_i},
\]
whereas for \(r_i\in\Rcal\),
\[
 Z_K(h,\Rcal\setminus\{r_i\})-Z_\ast
 =\frac{a_i(Z_\ast-\kappa_i)}
        {B_1(h,\Rcal)-a_i}.
\]
Both denominators are positive: the first is immediate, while for
\(r_i\in\Rcal\),
\[
 B_1(h,\Rcal)-a_i
 =\delta_0+\sum_{\substack{r_k\in\Rcal\\k\neq i}}a_k>0.
\]
The minimality of \(\Rcal\) therefore implies
\[
  r_i\in\Rcal\Longrightarrow\kappa_i\leq Z_\ast,
  \qquad
  r_i\notin\Rcal\Longrightarrow\kappa_i\geq Z_\ast.
\]
Thus every \(r_i\) with \(\kappa_i<Z_\ast\) belongs to \(\Rcal\), and
every \(r_i\) with \(\kappa_i>Z_\ast\) is omitted; indices with
\(\kappa_i=Z_\ast\) may be added or removed without changing the
ratio.  In particular, if \(r_j\) is selected and \(i<j\), then
\(\kappa_i\leq\kappa_j\leq Z_\ast\).  If
\(\kappa_i<Z_\ast\), the preceding implication gives
\(r_i\in\Rcal\); if \(\kappa_i=Z_\ast\), we may add \(r_i\) without
changing the minimum.  Thus a minimizing set may be enlarged,
without changing its value, to an initial segment.

For consecutive initial segments,
\[
 Z_{j+1}-Z_j
 =\frac{a_{j+1}(\kappa_{j+1}-Z_j)}
         {\delta_0+\displaystyle\sum_{i=1}^{j+1}a_i}.
\]
Hence \(Z_{j+1}<Z_j\) exactly when \(\kappa_{j+1}<Z_j\).
If \(\kappa_{j+1}\geq Z_j\), then \(Z_{j+1}\) is a weighted average of
\(Z_j\) and \(\kappa_{j+1}\), so
\[
  Z_j\leq Z_{j+1}\leq\kappa_{j+1}.
\]
If \(j+1<s\), then
\(\kappa_{j+2}\geq\kappa_{j+1}\geq Z_{j+1}\), so the next step cannot
decrease the ratio either.  Induction shows that no later step can
decrease it.  By the definition of \(j_\ast\), \(Z_j\) decreases
strictly for \(j<j_\ast\) and is nondecreasing for \(j\geq j_\ast\).
Thus \(Z_{j_\ast}\) is the minimum among the initial segments and,
by the first part of the proof, among all subsets.  Finally, adding
or removing any index with \(\kappa_i=Z_{j_\ast}\) leaves the ratio
equal to \(Z_{j_\ast}\).
\end{proof}

For \(K=2\), \Cref{cor:core2} gives
\begin{equation}\label{eq:kappa-core2}
\begin{split}
 \kappa_2(h,r)
  ={}&2-2(h-1)u(h)-2(r-1)u(r)\\
    &+2(h-1)(r-1)u(hr).
\end{split}
\end{equation}
For fixed \(h\) with \(\delta_0>0\), \Cref{lem:prefix} therefore
shows that the minimum over \(\Rcal\) is attained by an initial segment
after the elements of \(\Pcal\setminus\{h\}\) are ordered by
\(\kappa_2(h,r)\).

\subsection{An example}

We illustrate the selection rule with \(q=7109\).  The relevant
factorizations are
\[
\begin{aligned}
 q-1&=2^2\cdot1777,\\
 q+1&=2\cdot3^2\cdot5\cdot79,\\
 q^2+1&=2\cdot2273\cdot11117.
\end{aligned}
\]
Here \(1777\), \(2273\), and \(11117\) are prime, and hence
\[
 \begin{aligned}
  \Pcal&=\{3,5,79,1777,2273,11117\},\\
  \Acal&=\{1777\},\qquad
  \Bcal=\{3,5,79\},\qquad
  \Ccal=\{2273,11117\}.
 \end{aligned}
\]
Take \(K=2\).  For the empty prefix,
\[
 \delta_0\approx0.45,\qquad
 Z_0\approx94.30>84.33\approx\frac{q+1}{\sqrt q},
\]
so the empty choice does not prove positivity.

Now fix \(h=3\).  Since
\(u(3)=u(5)=u(15)=2\), \eqref{eq:kappa-core2} gives
\[
 \kappa_2(3,5)=10,\qquad
 \kappa_2(3,79)=306.
\]
These are the two smallest values of \(\kappa_2(3,r)\).  Since
\(10<Z_0\), the first step
selects \(5\).  For \(\Rcal=\{5\}\),
\[
 B_1(3,\{5\})\approx0.52,\qquad
 Z_2(3,\{5\})\approx83.49
 <84.33\approx\frac{q+1}{\sqrt q}.
\]
Since \(83.49<306\), the stopping rule in \Cref{lem:prefix} gives
\(\Rcal=\{5\}\) for \(h=3\).  The corresponding weight is
\[
 w_2(P)=(1-\rho_2(P))
 \left(1-\sum_{p\in\Pcal}\rho_p(P)+\rho_{15}(P)\right).
\]
Thus \Cref{prop:modified} gives \(N(n)>0\).

\begin{remark*}
For comparison, minimizing the ratio in the Bagger--Punch sieve
\cite[Theorem~1]{BaggerPunch} with the same exact-order bounds gives
\[
 Z_{\mathrm{BP}}\approx85.46
 >84.33\approx\frac{q+1}{\sqrt q}.
\]
Thus the original Bagger--Punch sieve does not prove positivity for
this example.  The exact outer core lowers the ratio from \(85.46\) to
\(83.49\), which is below the threshold \(84.33\).
\end{remark*}

\section{Finite verification}\label{sec:algorithm}

The uniformity in \(\mu\) and \(\Delta\) reduces the remaining proof to
arithmetic in \(q\).  We first derive a factorization-free cutoff from
\(u(e)\leq8\), then apply exact rational sieves below that cutoff.  The
residual values, together with the odd prime powers \(q\leq43\), are
settled by the exact residue-cover criterion.

\subsection{A uniform cutoff}

\begin{proposition}[Uniform cutoff]\label{prop:cutoff}
Every odd prime power
\[
  q\geq Q_0:=13288681
\]
satisfies \(N(n)>0\) for every
\((\mu,\Delta)\in\F_q^\times\times L^\times\).
\end{proposition}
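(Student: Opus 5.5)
The plan is to apply \Cref{cor:core2} with a very crude admissible weight whose error coefficient depends only on the number of prime divisors of \(n=q^4-1\). Then an elementary bound on \(\omega(n)\) will turn the sufficient condition \(q+1>Z_2\sqrt q\) into an explicit inequality in \(q\) alone. We do not use the finer values of \(u(e)\); the uniform bound \(u(e)\le 8\) from \eqref{eq:u} is enough. Since all character-sum bounds are uniform in \((\mu,\Delta)\), so is the resulting criterion.

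\emph{Step 1 (the sieve constant).} Take \(K=2\) and let \(\Pcal\) be the set of odd primes dividing \(n\), with \(s=|\Pcal|\). Use the Bagger--Punch weight with \(\Rcal=\varnothing\), i.e.\ \(w=1-\sum_{p\in\Pcal}\rho_p\). Then \(B_1=\delta_0=1-\sum_{p\in\Pcal}1/p\), and \(B_p=-1/p\). With \(u\le 8\), \Cref{cor:core2} gives
\[
Z_2\le 2+\frac{16}{\delta_0}\sum_{p\in\Pcal}\Bigl(1-\frac1p\Bigr)\le 2+\frac{16\,s}{\delta_0}.
\]
The case \(\delta_0\le 0\) is avoided as follows. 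We choose the weight with a small exact core, or with a distinguished inner prime \(h\) as in \Cref{prop:modified}, exactly when \(\sum 1/p\) is large. For large \(q\), however, we expect \(\delta_0>0\) to follow from the bound on \(s\) in Step 2. We bound \(\sum_{p\in\Pcal}1/p\) by the sum of reciprocals of the first \(s\) odd primes, which is \(<1\) as long as \(s\le 27\), as in the Remark after \Cref{prop:modified}.

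\emph{Step 2 (bounding \(s\) in terms of \(q\)).} Since \(16\mid n\), we have \(n\ge 16\prod_{i=1}^{s}p_i'\), where \(p_i'\) runs over the first \(s\) odd primes. Given \(q\), this caps \(s\) by the largest \(s\) with \(16\prod_{i\le s}p_i'<q^4\). We then split into ranges of \(s\). For each \(s\), the smallest admissible \(q\) is forced by the primorial bound. The worst case \(\delta_0\ge 1-\sum_{i\le s}1/p_i'\) then gives an explicit value \(Z(s)\). We check that \(q+1>Z(s)\sqrt q\) whenever \(q\ge \max(Q_0,\ q_{\min}(s))\). For large \(s\), this becomes an inequality of the shape \(\sqrt q>2+16s/\delta_0(s)\) against \(q\ge(\text{primorial}/16)^{1/4}\), which holds comfortably since primorials grow superexponentially.

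\emph{Main obstacle.} The crude bound above is too weak for \(s\) in the middle range. There \(\delta_0(s)\) becomes small as \(s\to 27\), and \(Z(s)\) is then of size roughly \(16s/\delta_0\), several thousand. That forces \(\sqrt q\) to be several thousand, which matches \(Q_0\approx 1.3\times10^7\), i.e.\ \(\sqrt{Q_0}\approx3645\). So the cutoff \(Q_0\) is presumably the point where the worst-case value of \(s\) just clears the bound. Near that threshold I would first try to improve the estimate by choosing the inner core \(h=3\), or the smallest prime in \(\Pcal\), and \(\Rcal\) as in \Cref{lem:prefix} for the worst-case prime configuration. If that still falls short, I would enlarge the exact core to \(K=2\cdot3\) or \(K=2\cdot3\cdot5\) and use \Cref{prop:core} directly. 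The computation then reduces to a finite table over \(s\), and the value \(Q_0\) is read off from its worst entry.
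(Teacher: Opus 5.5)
\textbf{There is a genuine gap.} Your architecture matches the paper's: the uniform bound $u\le 8$, the lower bound $q^4-1\ge 16\prod_{i\le s}p_i'$, a worst-case $\delta$ taken from the first primes, and a finite table. The step that is supposed to make the table finite, however, fails.

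First, the Remark after \Cref{prop:modified} bounds $\sum_{p\in\Pcal\setminus\{h\}}1/p$, so it omits $h=3$. With $3$ included, $1/3+1/5+\cdots+1/23\approx 0.99896$, and adding $1/29$ exceeds $1$. So for $K=2$ and $\Rcal=\varnothing$ your worst-case $\delta_0$ is only about $0.001$ at $s=8$, which gives $Z_2\approx 1.07\times 10^5$ and a cutoff near $10^{10}$. It is negative for every $s\ge 9$.

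Hence the ``large $s$'' step, comparing $\sqrt q$ with $2+16s/\delta_0(s)$ against the primorial bound, has no content from $s=9$ on, because the denominator is negative. More fundamentally, $\sum 1/p$ diverges, so no sieve whose exact core has bounded size can certify all values of $\nu=\omega(q^4-1)$ uniformly. A single inner prime $h$ does not rescue this: in the worst case its reciprocal-sum hypothesis fails for every $h$ once $|\Pcal|\ge 28$. Your fallback of enlarging $K$ to $2\cdot 3$ or $2\cdot3\cdot5$ helps in a middle range, but it cannot close the argument.

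\textbf{The missing idea is to let the exact core grow with $\nu$.}

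For $\nu\ge 161$, the paper takes the full core $K=R_n$. Then $\Pcal=\varnothing$, $B_1=1$ and $Z_K=E(K)\le 8(2^\nu-1)$, so it suffices that $\sqrt q>8\cdot 2^\nu$. This follows from $q^4>8p_\nu^\#$ once $p_\nu^\#>2^{8\nu+21}$, which holds at $\nu=161$. It then propagates because every later prime exceeds $256$.

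For $21\le\nu\le160$, the paper takes $K=r_1\cdots r_m$, the $m$ smallest prime divisors of $n$, with $m=3$ up to $\nu=68$ and $m=5$ beyond. This is not literally $2\cdot3\cdot5$, which fails to divide $n$ when $q$ is a power of $3$ or $5$. The paper bounds $\delta\ge\delta_{\nu,m}>0$ via $r_j\ge p_j$ and checks the criterion at $q=L_\nu$.

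Only $\nu\le 20$ requires optimizing $m$. The worst entry is $\nu=20$ with $m=3$, where $\delta_{20,3}\approx 0.29$ and $Z_{20,3}\approx 3645.36\approx\sqrt{Q_0}$. So $Q_0$ comes from a three-prime core at $\nu=20$, not from $K=2$ with $s$ near $27$ as your diagnosis suggests.
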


\begin{proof}
Let \(\nu=\omega(q^4-1)\).  Since \(v_2(q^4-1)\geq4\),
\[
  q^4-1\geq8\rad(q^4-1).
\]
Let \(p_j\) denote the \(j\)-th prime and write
\(p_\nu^\#=\prod_{j=1}^{\nu}p_j\).  Then
\[
  q^4-1\geq8p_\nu^\#.
\]

We first dispose of large \(\nu\) by taking the full core \(K=R_n\).
Then \(\Pcal=\varnothing\) and \(B_1=1\), so
\Cref{prop:core} and the uniform estimate give
\[
  Z_K=E(K)\leq8(2^\nu-1).
\]
It is therefore enough to require
\[
  \sqrt q>8\cdot2^\nu.
\]
Indeed, this gives
\(q>8\cdot2^\nu\sqrt q>Z_K\sqrt q\).
In view of \(q^4-1\geq8p_\nu^\#\), this condition is implied by
\[
  8p_\nu^\#>(8\cdot2^\nu)^8,
  \qquad\text{or equivalently}\qquad
  p_\nu^\#>2^{8\nu+21}.
\]
An exact integer check shows that this last inequality fails at
\(\nu=160\) but holds at \(\nu=161\), where it reads
\(p_{161}^\#>2^{1309}\).  Since \(p_{161}=947>256\), every subsequent
prime also exceeds \(256\).  Thus, if the inequality holds at \(\nu\),
then
\[
  p_{\nu+1}^\#
  =p_{\nu+1}p_\nu^\#
  >2^8\,2^{8\nu+21}
  =2^{8(\nu+1)+21}.
\]
Thus the primorial inequality holds for every \(\nu\geq161\), and
\[
  q^4>8p_\nu^\#>(8\cdot2^\nu)^8.
\]
Hence \(\sqrt q>8\cdot2^\nu\), so every \(\nu\geq161\) is covered.

For \(1\leq\nu\leq160\), let \(K\) be an exact core with \(m\) prime
divisors, and use only the uniform bound.  Let
\(\lvert\Pcal\rvert=s\geq1\), and define
\[
  \delta=1-\sum_{p\in\Pcal}\frac1p.
\]
If \(\delta>0\), \Cref{prop:modified} applies with any
\(h\in\Pcal\) and \(\Rcal=\varnothing\).  In this case,
\[
  B_1(h,\varnothing)=\delta.
\]
Since \(\mathcal L=\Pcal\setminus\{h\}\),
\eqref{eq:modified-error} simplifies to
\[
  T_K(h,\varnothing)
  =E(K)\delta+
    \sum_{p\in\Pcal}\left(1-\frac1p\right)F_p(K).
\]
The uniform estimate gives
\[
\begin{aligned}
  E(K)&\leq8(2^m-1),&
  F_p(K)&\leq8\cdot2^m,\\
  \sum_{p\in\Pcal}\left(1-\frac1p\right)&=s-1+\delta.
\end{aligned}
\]
Consequently,
\[
\begin{aligned}
  Z_K(h,\varnothing)
  &\leq8(2^m-1)
    +\frac{8\cdot2^m}{\delta}(s-1+\delta)\\
  &=8\left\{2^m\left(2+\frac{s-1}{\delta}\right)-1\right\}.
\end{aligned}
\]
Thus \Cref{prop:modified} gives the sufficient condition
\begin{equation}\label{eq:uniform-cutoff}
  q+1>
  8\sqrt q\left\{2^m\left(2+\frac{s-1}{\delta}\right)-1\right\}.
\end{equation}

We now apply \eqref{eq:uniform-cutoff} to \(q^4-1\).  Let
\(r_1<\cdots<r_\nu\) be its distinct prime divisors.  The primorial
bound gives
\[
  q\geq
  L_\nu:=\left\lceil(8p_\nu^\#+1)^{1/4}\right\rceil.
\]
For \(0\leq m<\nu\), take \(K=r_1\cdots r_m\), with \(K=1\) when
\(m=0\).  The remaining set has \(s=\nu-m\) primes.  Since
\(r_j\geq p_j\),
\[
  \delta
  =1-\sum_{j=m+1}^{\nu}\frac1{r_j}
  \geq
  1-\sum_{j=m+1}^{\nu}\frac1{p_j}
  =:\delta_{\nu,m}.
\]
The bound in \eqref{eq:uniform-cutoff} decreases as \(\delta\)
increases.  Hence, whenever \(\delta_{\nu,m}>0\), it is enough to
check
\[
  \frac{L_\nu+1}{\sqrt{L_\nu}}>
  8\left\{
    2^m\left(2+\frac{\nu-m-1}{\delta_{\nu,m}}\right)-1
  \right\}.
\]
Since \((x+1)/\sqrt x\) is increasing for \(x>1\), this check covers
every \(q\geq L_\nu\).  Clearing denominators and squaring reduces it
to an integer comparison.  A direct computation with exact rational
arithmetic in SageMath~10.9 shows that the displayed inequality holds
with \(m=3\) for \(21\leq\nu\leq68\) and with \(m=5\) for
\(69\leq\nu\leq160\).

For \(1\leq\nu\leq20\), we minimize the integer threshold from
\eqref{eq:uniform-cutoff} over all \(m\) with
\(\delta_{\nu,m}>0\).  \Cref{tab:uniform-thresholds} records the exact
minima.  Their maximum occurs at \(\nu=20\), with \(m=3\), and equals
\[
  Q_0=13288681.
\]
This covers the remaining values of \(\nu\) and proves the proposition.
\end{proof}

\subsection{The finite sieve computation}

It remains to consider the odd prime powers \(43<q<Q_0\).
For each such \(q\), SageMath~10.9~\cite{SageMath} factors
\(q-1\), \(q+1\), and \(q^2+1\).  Since
\(n=(q-1)(q+1)(q^2+1)\), these factorizations determine the prime
divisors of \(n\) and their classes \(\Acal,\Bcal,\Ccal\).  The
quantities \(B_1\) and \(Z\) are then computed in \(\mathbb Q\).  A
weight with \(B_1>0\) certifies \(q\) if
\[
  \frac{q+1}{\sqrt q}>Z.
\]

For comparison, we optimize the Cohen and Bagger--Punch sieves in the
forms of \cite[Proposition~4.3]{CohenLines} and
\cite[Theorem~1 and Lemmas~6--7]{BaggerPunch}, respectively.

The double-core criterion is evaluated independently for every \(q\)
in the interval.  We take \(K=2\).  For each \(h\) with
\(\delta_0>0\), \Cref{lem:prefix} reduces the choice of \(\Rcal\) to
the initial segments determined by \(\kappa_2(h,r)\).  We evaluate
\(Z_2(h,\Rcal)\) for every such \(h\) and initial segment.  The exact
SageMath computation is summarized in \Cref{tab:finite-computation}.

\begin{table}[htbp]
\centering
\caption{Exact finite verification for \(43<q<Q_0\).}
\label{tab:finite-computation}
\vspace{0.6em}
\small
\renewcommand{\arraystretch}{1.05}
\setlength{\tabcolsep}{1.2em}
\begin{tabular}{@{\hspace{1em}}lrrr@{\hspace{1em}}}
\toprule
Sieve & Certified & Unresolved & Largest unresolved\\
\midrule
Cohen & \(863{,}748\) & \(3{,}631\) & \(278{,}881\)\\
Bagger--Punch & \(864{,}434\) & \(2{,}945\) & \(278{,}881\)\\
Double-core & \(864{,}675\) & \(2{,}704\) & \(204{,}931\)\\
\bottomrule
\end{tabular}
\end{table}

\subsection{The projective-conic verification}

The remaining finite cases are the \(2{,}704\) residual values in
\Cref{tab:finite-computation}, together with the odd prime powers at
most \(43\).  Fix \(\Delta\in L^\times\) and take the
normalized root function
\(b([1:w])=w-1/2\).  By \eqref{eq:root-set-scaling}, this conic
certifies every multiplier precisely when
\begin{equation}\label{eq:geometric-multiplier-cover}
  \forall\,\mu\in\F_q^\times\quad
  \exists\,P\in\mathscr C_\Delta(\F_q)
  \quad\text{such that \(\mu b(P)\) is primitive.}
\end{equation}

Let \(g\) generate \(\F_{q^4}^\times\), and write
\[
  n=q^4-1,\qquad
  M=\frac{n}{q-1}=(q+1)(q^2+1),\qquad
  \gamma=g^M.
\]
Then \(\gamma\) generates \(\F_q^\times\), so every multiplier is
\(\gamma^j\).  For \(P\in\mathscr C_\Delta(\F_q)\), write
\(b(P)=g^{\kappa(P)}\), with \(0\leq\kappa(P)<n\).

\begin{lemma}[Exact residue-cover criterion]\label{lem:residue-cover}
For \(j\in\mathbb Z\), the element \(\gamma^j b(P)\) is primitive if
and only if
\[
\begin{cases}
  2\nmid\kappa(P),\quad
  p\nmid\kappa(P)&(p\in\Bcal\cup\Ccal),\\
  j\not\equiv-4^{-1}\kappa(P)\pmod p&(p\in\Acal).
\end{cases}
\]
Consequently, \eqref{eq:geometric-multiplier-cover} is equivalent to
\begin{equation}\label{eq:exact-cover}
  \bigcup_{\substack{P\in\mathscr C_\Delta(\F_q)\\
                     2\nmid\kappa(P)\\
                     p\nmid\kappa(P)\ (p\in\Bcal\cup\Ccal)}}
       \prod_{p\in\Acal}
       \left(\F_p\setminus\{-4^{-1}\kappa(P)\}\right)
  =\prod_{p\in\Acal}\F_p.
\end{equation}
Here the excluded value in the \(p\)-th factor is taken modulo \(p\).
\end{lemma}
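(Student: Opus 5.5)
The plan is to translate primitivity of \(\gamma^j b(P)\) into a coprimality condition on the exponent and then to read it off one prime at a time. Since \(\gamma=g^M\) and \(b(P)=g^{\kappa(P)}\), we have
\[
  \gamma^j b(P)=g^{Mj+\kappa(P)},
\]
and this is primitive in the cyclic group \(\F_{q^4}^\times\) exactly when \(\gcd(Mj+\kappa(P),n)=1\). Equivalently, \(p\nmid Mj+\kappa(P)\) for every prime \(p\mid n\). Because \(n=(q-1)(q+1)(q^2+1)\), and the three factors have pairwise greatest common divisor \(2\), every prime divisor of \(n\) is either \(2\) or lies in exactly one of \(\Acal,\Bcal,\Ccal\) from \eqref{eq:prime-classes}. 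It therefore suffices to compute \(M\bmod p\) in each of these four cases.

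For \(p=2\): \(q\) is odd, so \(q+1\) is even and \(2\mid M\). The condition becomes \(2\nmid\kappa(P)\), independent of \(j\). For \(p\in\Bcal\cup\Ccal\): \(p\) divides \(q+1\) or \(q^2+1\), hence \(p\mid M\), and again the condition is \(p\nmid\kappa(P)\). For \(p\in\Acal\): \(q\equiv1\pmod p\) gives \(M\equiv2\cdot2=4\pmod p\). Since \(p\) is odd, \(4\) is invertible modulo \(p\), so \(p\nmid 4j+\kappa(P)\) is equivalent to \(j\not\equiv-4^{-1}\kappa(P)\pmod p\). Because \(p\mid n\), the residues of \(\kappa(P)\) modulo \(p\) are well defined, since \(\kappa(P)\) is only determined modulo \(n\). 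This proves the first assertion.

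For the equivalence with \eqref{eq:exact-cover}, every \(\mu\in\F_q^\times\) has the form \(\gamma^j\) with \(j\in\mathbb Z/(q-1)\). By the first part, for a point \(P\) with \(2\nmid\kappa(P)\) and \(p\nmid\kappa(P)\) for all \(p\in\Bcal\cup\Ccal\), the condition on \(j\) depends only on the vector \((j\bmod p)_{p\in\Acal}\). Points failing these conditions never yield primitive elements, whatever \(j\) is. Hence \eqref{eq:geometric-multiplier-cover} holds if and only if every vector in the image of the reduction map \(\mathbb Z/(q-1)\to\prod_{p\in\Acal}\F_p\) lies in the union on the left of \eqref{eq:exact-cover}.

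The only point requiring care is that this reduction map is surjective. This follows from the Chinese remainder theorem, since the primes in \(\Acal\) are distinct and all divide \(q-1\). Surjectivity makes the cover condition over all \(j\) equivalent to the cover of the full product \(\prod_{p\in\Acal}\F_p\). When \(\Acal=\varnothing\), the product is a single point, and the criterion reduces to the existence of one admissible \(P\).
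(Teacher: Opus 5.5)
Your proof is correct and follows the paper's argument essentially step for step. You write \(\gamma^j b(P)=g^{\kappa(P)+Mj}\) and test each prime divisor of \(n\), using \(p\mid M\) for \(p=2\) and \(p\in\Bcal\cup\Ccal\) and \(M\equiv4\pmod p\) for \(p\in\Acal\), then invoke the Chinese remainder theorem so that \((j\bmod p)_{p\in\Acal}\) fills \(\prod_{p\in\Acal}\F_p\); your explicit treatment of \(\Acal=\varnothing\) is a harmless addition that the paper leaves implicit.
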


\begin{proof}
\[
  \gamma^j b(P)=g^{\kappa(P)+Mj}.
\]
This element is primitive precisely when no prime divisor of \(n\)
divides \(\kappa(P)+Mj\).  Since \(4\mid M\), the condition at \(2\)
is \(2\nmid\kappa(P)\).  If \(p\in\Bcal\cup\Ccal\), then \(p\mid M\),
whereas if \(p\in\Acal\), then
\[
  M=(q+1)(q^2+1)\equiv4\pmod p,
\]
which gives the displayed conditions.  As \(j\) varies modulo \(q-1\),
the Chinese remainder theorem shows that
\((j\bmod p)_{p\in\Acal}\) runs through
\(\prod_{p\in\Acal}\F_p\), proving \eqref{eq:exact-cover}.
\end{proof}

The rational points of \(\mathscr C_\Delta\) admit an explicit
parametrization by the solutions \(t\in\F_{q^4}\) of
\(t^{q+1}=\Delta\):
\begin{equation}\label{eq:computational-conic-bijection}
  \bigl\{t\in\F_{q^4}:t^{q+1}=\Delta\bigr\}
  \xrightarrow{\ \sim\ }\mathscr C_\Delta(\F_q),
  \qquad
  t\longmapsto\left[1:\frac{t+t^q}{2}\right].
\end{equation}
The inverse sends \([1:w]\) to \(w-w^q\).  Indeed, if
\(P=[1:w]\in\mathscr C_\Delta(\F_q)\), put \(t=w-w^q\).  Then
\[
  t^q=w^q+w,
  \qquad
  t^{q+1}=(w-w^q)(w+w^q)=\Phi(w)=\Delta,
  \qquad
  w=\frac{t+t^q}{2}.
\]
Conversely, suppose that \(t^{q+1}=\Delta\).  Since
\(\Delta^q=-\Delta\) and \(t\neq0\),
\[
  t^{q^2-1}
  =\frac{(t^{q+1})^q}{t^{q+1}}
  =\frac{\Delta^q}{\Delta}
  =-1,
\]
so \(t^{q^2}=-t\).  Put \(w=(t+t^q)/2\).  Then \(w\in V_-\) and
\[
  w-w^q=t,
  \qquad
  w+w^q=t^q,
  \qquad
  \Phi(w)=t^{q+1}=\Delta.
\]
Thus \([1:w]\in\mathscr C_\Delta(\F_q)\), and the two displayed
constructions are inverse.

We verified \eqref{eq:exact-cover} exhaustively in SageMath~10.9
\cite{SageMath}, using \eqref{eq:computational-conic-bijection} to
generate the conic points.  The test succeeds for every
\(\Delta\in L^\times\) for all \(2{,}704\) odd prime powers left
unresolved by the double-core computation.  It also succeeds for every
\(\Delta\in L^\times\) for each odd prime power \(q\leq43\) outside
\(\mathcal E\).  With the multiplier fixed at \(\mu=1\), it succeeds
for every \(\Delta\in L^\times\) for all odd prime powers \(q\leq43\),
except \(q=13\).

For completeness, we record one failed cover for each exceptional
value of \(q\).  Use the generators
\(g\in\F_{q^4}^\times\) and \(\zeta\in\F_{q^2}^\times\) selected by
the computation, and retain \(\gamma=g^M\).  Put
\[
  t_0=g^{(q^2+1)/2},
  \qquad
  \Delta_i=(\zeta^i t_0)^{q+1}.
\]
The following table gives a value of \(\Delta_i\) and an uncovered
multiplier \(\mu\) for each exceptional \(q\).

\begin{table}[H]
\centering
\caption{Counterexamples from the exact finite verification.}
\label{tab:counterexamples}
\vspace{0.6em}
\begin{tabular}{c@{\qquad}c@{\qquad}c}
\toprule
\(q\) & \(\Delta\) & \(\mu\)\\
\midrule
 7  & \(\Delta_0\)  & \(\gamma\)\\
11  & \(\Delta_0\)  & \(\gamma^4\)\\
13  & \(\Delta_0\)  & \(1\)\\
19  & \(\Delta_3\)  & \(\gamma\)\\
29  & \(\Delta_3\)  & \(\gamma^5\)\\
31  & \(\Delta_{10}\) & \(\gamma^2\)\\
41  & \(\Delta_{10}\) & \(\gamma^3\)\\
43  & \(\Delta_2\)  & \(\gamma^7\)\\
\bottomrule
\end{tabular}
\end{table}

In each row, \eqref{eq:exact-cover} fails at the displayed \(\Delta\),
and the displayed \(\mu\) is an uncovered multiplier.  Let
\(\bar\alpha\) be the unique coset such that, for \(\alpha\in\bar\alpha\),
\[
  \alpha-\alpha^q=\mu^2\Delta.
\]
Then \Cref{lem:residue-cover} and \eqref{eq:root-set-scaling} show that
\(\mathcal F_{\mu,\bar\alpha}\) has no primitive member.

\begin{proposition}[Exact finite verification]
\label{prop:finite-verification}
For every one of the \(2{,}704\) residual odd prime powers, and for
every odd prime power \(q\leq43\) with \(q\notin\mathcal E\), the exact
cover \eqref{eq:exact-cover} holds for every \(\Delta\in L^\times\).
Moreover, if \(q\leq43\) and \(q\neq13\), then for every
\(\Delta\in L^\times\) there is a point
\(P\in\mathscr C_\Delta(\F_q)\) for which \(b(P)\) is primitive.
\end{proposition}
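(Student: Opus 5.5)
The statement is a finite computational claim. The plan is to reduce it, uniformly in \(\Delta\), to a finite explicit check that can be organised and certified exactly.

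For each \(q\) in the list (the \(2{,}704\) residual values from \Cref{tab:finite-computation} and the odd prime powers \(q\le 43\) outside \(\mathcal E\)), fix a generator \(g\) of \(\F_{q^4}^\times\) and put \(M=(q+1)(q^2+1)\) and \(\gamma=g^M\). Factor \(q-1\), \(q+1\) and \(q^2+1\) to obtain the classes \(\Acal,\Bcal,\Ccal\).

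Next, enumerate \(L^\times\). Every \(\Delta\in L^\times\) arises as \(t^{q+1}\) for some \(t\in\F_{q^4}\) with \(t^{q^2}=-t\): the norm-type map \(t\mapsto t^{q+1}\) from the \(V_-\)-type elements onto \(L^\times\) is surjective by counting. Concretely, one takes \(t=\zeta^i t_0\) with \(t_0=g^{(q^2+1)/2}\) and \(\zeta\) a generator of \(\F_{q^2}^\times\), exactly as in \Cref{tab:counterexamples}.

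For each \(\Delta\), the bijection \eqref{eq:computational-conic-bijection} gives all \(q+1\) points of \(\mathscr C_\Delta(\F_q)\). This avoids any search: the solutions of \(t^{q+1}=\Delta\) form a single coset of the subgroup of \((q+1)\)-th roots of unity in \(\F_{q^4}^\times\). For each point \(P=[1:(t+t^q)/2]\), compute \(b(P)=w-1/2\) and its discrete logarithm \(\kappa(P)\) modulo \(n\). Only the residues of \(\kappa(P)\) modulo the primes dividing \(n\) are needed, and these can be read off from \(b(P)^{n/p}\) without a full discrete logarithm. Keep the points with \(2\nmid\kappa(P)\) and \(p\nmid\kappa(P)\) for \(p\in\Bcal\cup\Ccal\), and record the forbidden residues \(-4^{-1}\kappa(P)\bmod p\) for \(p\in\Acal\).

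Then check \eqref{eq:exact-cover} directly by iterating over \(j\bmod (q-1)\), or equivalently over \(\prod_{p\in\Acal}\F_p\), and finding for each \(j\) a retained point that avoids the excluded residue in every coordinate. By \Cref{lem:residue-cover} this is equivalent to \eqref{eq:geometric-multiplier-cover}, which gives the first assertion.

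The second assertion is the special case \(j=0\), that is \(\mu=1\), now ranging over all \(q\le43\) except \(13\), including the exceptional values \(7,11,19,\dots\). It amounts to finding a single retained point \(P\) with \(p\nmid\kappa(P)\) for all \(p\in\Acal\) as well, in other words a point with \(b(P)\) primitive. This uses the same enumeration.

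A symmetry reduces the work. The scaling \eqref{eq:root-set-scaling} shows that replacing \(\Delta\) by \(c^2\Delta\) with \(c\in\F_q^\times\) only rescales the multiplier. So for the cover statement it suffices to check \(\Delta\) modulo \((\F_q^\times)^2\), that is, two classes. This keeps the computation for the larger residual \(q\) (up to about \(2\times10^5\)) feasible, since each check costs \(O(q\,\omega(n))\) exponentiations plus the cover test.

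The main obstacle is the cover test when \(|\Acal|\) is large and \(q-1\) is big. For this I would first try a greedy approach: for each \(j\), scan the points until one succeeds, which is expected to happen quickly because a random point succeeds with probability roughly \(\prod_{p}(1-1/p)\). The exact test is needed only near failure, and the failures are precisely the documented \(\mathcal E\) cases, which lie outside the claim.
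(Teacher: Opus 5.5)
Your pipeline is the paper's own. You enumerate \(L^\times\) as \(\Delta_i=(\zeta^{i}t_0)^{q+1}\), generate \(\mathscr C_\Delta(\F_q)\) through \eqref{eq:computational-conic-bijection}, read the residues of \(\kappa(P)\) off the powers \(b(P)^{n/p}\), test \eqref{eq:exact-cover} over \(\prod_{p\in\Acal}\F_p\), and take \(j=0\) for the fixed-multiplier assertion. The gap is the symmetry you use to make this feasible: checking \(\Delta\) only modulo \((\F_q^\times)^2\) does not prove the statement. Each family \(\mathcal F_{\mu,\bar\alpha}\) corresponds to exactly one pair \((\Delta,\mu)\) with \(\Delta=D/\mu^2\). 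The cone scaling \((\mu,D,x)\mapsto(c\mu,c^2D,cx)\) leaves \(\Delta\) fixed, so the cover test at two values of \(\Delta\) certifies only \(2(q-1)\) of the \((q-1)^2\) families. Nor does any rescaling of the multiplier link \(\Delta\) and \(c^2\Delta\). Put \(W_\Delta=\{w\in V_-:\Phi(w)=\Delta\}\) and \(B_\Delta=\{w-1/2:w\in W_\Delta\}\); the cover at \(\Delta\) then says that \(\mu B_\Delta\) contains a primitive element for every \(\mu\in\F_q^\times\). Homogeneity of \(\Phi\) gives \(W_{c^2\Delta}=cW_\Delta\), hence \(B_{c^2\Delta}=\{cw-1/2:w\in W_\Delta\}\). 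Suppose this equalled \(\lambda B_\Delta\) for some \(\lambda\in\F_q^\times\). Comparing components in \(\F_{q^4}=V_+\oplus V_-\) forces \(\lambda=1\), and then \(cW_\Delta=W_\Delta\), which forces \(c^2=1\). The constant shift \(1/2\), inherited from the linear coefficient, is what breaks the homogeneity you are relying on.

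What \eqref{eq:root-set-scaling} actually says is this: for a fixed coset, replacing \(\mu\) by \(c\mu\) replaces \(\Delta\) by \(c^{-2}\Delta\). But \(\mathcal F_{\mu,\bar\alpha}\) and \(\mathcal F_{c\mu,\bar\alpha}\) are different families, and the theorem needs both. The cheap symmetry that is actually available is Galois conjugation. The map \(w\mapsto w^q\) sends \(W_\Delta\) onto \(W_{-\Delta}\) and \(b\) to \(b^q\), fixes \(\mu\), and preserves primitivity, so \(\Delta\) and \(-\Delta\) may be identified. That halves the work but does not reduce it to two classes. The paper runs the cover test for every \(\Delta\in L^\times\). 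Feasibility has to come from stopping each test as soon as the retained points cover \(\prod_{p\in\Acal}\F_p\), not from discarding values of \(\Delta\). Finally, your greedy scan over \(j\) is already the exact test, so there is no separate stage ``near failure''. You also cannot assume in advance that failures occur only for \(q\in\mathcal E\), since that is precisely what the computation has to establish.
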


\subsection{Completion of the proof}

\begin{proof}[Proof of \Cref{thm:main}]
Let \(q\notin\mathcal E\), and fix
\(\mu\in\F_q^\times\) and a coset
\(\bar\alpha\in\F_{q^2}/\F_q\).  For \(\bar\alpha\neq\F_q\), combine
\Cref{prop:cutoff}, the finite sieve computation summarized in
\Cref{tab:finite-computation}, and \Cref{prop:finite-verification}.
Together they give \(N(n)>0\), so
\Cref{cor:reduction} supplies the required primitive member.

For \(\bar\alpha=\F_q\), the assertion follows directly from Cohen's
prescribed-trace theorem \cite[Theorem~1]{CohenTrace}.
\end{proof}

\begin{proof}[Proof of \Cref{cor:GM}]
Gow and McGuire proved
\[
  \text{Conjecture~2}\quad\Longleftrightarrow\quad
  \text{Conjecture~3}.
\]
Conjectures~2 and~3 therefore follow immediately from \Cref{thm:main},
and the \(q=43\) row of \Cref{tab:counterexamples} proves sharpness.
Conjecture~1 is the specialization \(\mu=1\): \Cref{thm:main} covers
\(q>43\), and the fixed-multiplier computation in
\Cref{prop:finite-verification} covers \(q\leq43\) except \(q=13\).
\end{proof}

\section{Conclusion}\label{sec:conclusion}

For every odd prime power \(q\notin\mathcal E\), every family
\(\mathcal F_{\mu,\bar\alpha}\) contains a primitive member.
In particular, Gow and McGuire's Conjecture~1 holds for \(q\neq13\),
while Conjectures~2 and~3 hold for \(q>43\), with a sharp threshold.

The proof is governed by a genus-zero geometry.  The split and
anisotropic restrictions of \(\Phi\) give two projective conics whose
affine points are the reducible and irreducible roots.  On the latter,
\(q^2\)-Frobenius acts through the root-pair involution.  The relative
norm of the root function descends to the quotient conic, pairing four
source lines into two quotient lines.  This explains the bounds
\(6\sqrt q,8\sqrt q\) before descent and
\(2\sqrt q,4\sqrt q\) after descent.

In higher degrees, it is natural to seek a variety whose rational points
parametrize the normalized roots and whose Frobenius strata record
factorization.  The cubic case is the first test of whether a Galois
quotient of such a variety can also carry norm descent.

\appendix
\renewcommand{\thesection}{Appendix~\Alph{section}}

\section{Verification of the uniform cutoff}
\label{app:uniform-cutoff}

We record the exact finite comparisons used to optimize the cutoff for
\(1\leq\nu\leq20\) in the proof of \Cref{prop:cutoff}.  Retain the
notation \(L_\nu\) and \(\delta_{\nu,m}\) from that proof, and write
\[
  Z_{\nu,m}
  =8\left\{
    2^m\left(2+\frac{\nu-m-1}{\delta_{\nu,m}}\right)-1
   \right\}.
\]

Let
\[
  \mathcal M_\nu
  =\left\{m\in\mathbb Z:
    0\leq m<\nu,\quad
    \delta_{\nu,m}
    =1-\sum_{j=m+1}^{\nu}\frac1{p_j}>0
   \right\}.
\]
The value \(m=\nu\) is excluded because it leaves no primes outside the
core and is covered by the full-core argument in the proof.
For \(m\in\mathcal M_\nu\), let \(Q_{\nu,m}\) be the least integer
\(q>1\) for which
\[
  \frac{q+1}{\sqrt q}>Z_{\nu,m},
\]
and define
\[
  Q_\nu=\min_{m\in\mathcal M_\nu}Q_{\nu,m}.
\]

The minimizing value of \(m\) is determined without numerical
approximation.  Since
\[
  \delta_{\nu,m+1}
  =\delta_{\nu,m}+\frac1{p_{m+1}},
\]
direct subtraction gives
\[
\begin{aligned}
 Z_{\nu,m+1}-Z_{\nu,m}
 ={}&
 \frac{8\cdot2^m}
      {\delta_{\nu,m}\delta_{\nu,m+1}}\\
 &{}\times
 \left[
  2\delta_{\nu,m}^2
  +\left(\nu-m-3+\frac2{p_{m+1}}\right)\delta_{\nu,m}
  -\frac{\nu-m-1}{p_{m+1}}
 \right].
\end{aligned}
\]
For every adjacent pair in each admissible range, clearing the positive
denominators in the bracket gives an integer sign comparison.  These
comparisons show that \(Z_{\nu,m}\) decreases up to the value \(m_\nu\)
listed below and increases thereafter.  Thus \(m_\nu\) is the unique
minimizer.  The value \(Q_\nu\) is then determined from its defining
inequality.

\begin{table}[H]
\centering
\caption{Exact optimized thresholds for \(1\leq\nu\leq20\).}
\label{tab:uniform-thresholds}
\vspace{0.6em}
\small
\renewcommand{\arraystretch}{1.05}
\setlength{\tabcolsep}{1.6em}
\begin{tabular}{@{\hspace{1em}}cccc@{\hspace{1em}}}
\toprule
\(\nu\) & admissible \(m\) & \(m_\nu\) & \(Q_\nu\)\\
\midrule
 1 & \(m=0\)          & 0 & \(62\)\\
 2 & \(0\leq m\leq1\)  & 1 & \(574\)\\
 3 & \(1\leq m\leq2\)  & 2 & \(3{,}134\)\\
 4 & \(1\leq m\leq3\)  & 2 & \(10{,}960\)\\
 5 & \(1\leq m\leq4\)  & 2 & \(28{,}569\)\\
 6 & \(1\leq m\leq5\)  & 2 & \(63{,}600\)\\
 7 & \(1\leq m\leq6\)  & 2 & \(124{,}846\)\\
 8 & \(1\leq m\leq7\)  & 2 & \(229{,}863\)\\
 9 & \(1\leq m\leq8\)  & 2 & \(397{,}153\)\\
10 & \(2\leq m\leq9\)  & 2 & \(644{,}694\)\\
11 & \(2\leq m\leq10\) & 2 & \(1{,}025{,}198\)\\
12 & \(2\leq m\leq11\) & 2 & \(1{,}569{,}908\)\\
13 & \(2\leq m\leq12\) & 3 & \(2{,}261{,}672\)\\
14 & \(2\leq m\leq13\) & 3 & \(3{,}057{,}777\)\\
15 & \(2\leq m\leq14\) & 3 & \(4{,}056{,}467\)\\
16 & \(2\leq m\leq15\) & 3 & \(5{,}275{,}078\)\\
17 & \(2\leq m\leq16\) & 3 & \(6{,}745{,}191\)\\
18 & \(2\leq m\leq17\) & 3 & \(8{,}553{,}225\)\\
19 & \(2\leq m\leq18\) & 3 & \(10{,}707{,}175\)\\
20 & \(2\leq m\leq19\) & 3 & \(13{,}288{,}681\)\\
\bottomrule
\end{tabular}
\end{table}

In particular,
\[
  \max_{1\leq\nu\leq20}Q_\nu
  =Q_{20}=13288681,
\]
which is the value \(Q_0\) used in \Cref{prop:cutoff}.

\clearpage
%\section{SageMath code}\label{app:source-code}

%The complete SageMath implementation used in the finite verification is
%given below.  It consists of the shared exact-arithmetic routines, the three
%sieve computations, the combined finite verification, and the verification
%of the uniform cutoff, and the normalized projective-conic verification.

%\subsection{Shared exact-arithmetic routines}

%\lstinputlisting[style=sagesource]{sage_verification/sieve_common.py}

%\subsection{Cohen sieve}

%\lstinputlisting[style=sagesource]{sage_verification/verify_cohen.py}

%\subsection{Bagger--Punch sieve}

%\lstinputlisting[style=sagesource]{sage_verification/verify_bagger_punch.py}

%\subsection{Double-core sieve}

%\lstinputlisting[style=sagesource]{sage_verification/verify_double_core.py}

%\subsection{Combined finite verification}

%\lstinputlisting[style=sagesource]{sage_verification/verify_staged.py}

%\subsection{Verification of the uniform cutoff}

%\lstinputlisting[style=sagesource]{sage_verification/verify_uniform_cutoff.py}

%\subsection{Normalized projective-conic verification}

%\lstinputlisting[style=sagesource]{sage_verification/direct_verify_normalized.py}

\end{document}